\documentclass[12pt, reqno]{amsart}

\usepackage[utf8]{inputenc}
\usepackage{amsmath, amsthm, amssymb, amsfonts}
\usepackage{mathtools}
\usepackage{hyperref} 
\usepackage{geometry} 
\usepackage{csquotes}
\usepackage{interval}
\intervalconfig{soft open fences}

\usepackage{enumitem}
\setlist[enumerate]{
    label=(\roman*)
}

\newtheorem{thm}{Theorem}[section]
\newtheorem{lemma}[thm]{Lemma}
\newtheorem{prop}[thm]{Proposition}

\newtheorem{remark}[thm]{Remark}

\theoremstyle{definition}
\newtheorem{question}{Question}[section]

\theoremstyle{definition}
\newtheorem{definition}[thm]{Definition}
\theoremstyle{definition}
\newtheorem{ex}[thm]{Example}

\newcommand{\doi}[1]{DOI: \href{https://doi.org/#1}{#1}}

\makeatletter
\newenvironment{pf}[1][\proofname]{\par
  \pushQED{\qed}%
  \normalfont \topsep-3\p@\relax
  \trivlist
  \item[\hskip\labelsep\itshape
  #1\@addpunct{.}]\ignorespaces
}{%
  \popQED\endtrivlist\@endpefalse
}
\makeatother

\newcommand\Set[2]{\{\,#1\mid#2\,\}}
\newcommand\norm[1]{\left\lVert#1\right\rVert}
\newcommand\abs[1]{\left|#1\right|}

\newcommand\supp{\mathrm{supp}}

\newcommand\N{\mathbb{N}}
\newcommand\Z{\mathbb{Z}}
\newcommand\R{\mathbb{R}}

\newcommand\en{(e_n)_{n\in\N}}
\newcommand\lambdan{(\lambda_n)}
\newcommand\lambdaen{(\lambda_n e_n)}
\newcommand\chin{(\chi_n)_{n\in\N}}

\newcommand\restr[2]{{
  \left.\kern-\nulldelimiterspace 
  #1 
  \right|_{#2} 
  }}

\title{On the refined properties of uniformly quasi-greedy bases}

\author[P. Yu]{Peiyang Yu}
\address{SAM, ETH Zurich, Switzerland}
\email{peiyang.yu@sam.math.ethz.ch}

\date{\today}

\begin{document}

\begin{abstract}
In this paper, we study some refined properties of uniformly quasi-greedy bases. In particular, we characterize the stability of uniformly quasi-greediness under scalings with bounded quotient and show that unconditionality is sufficient to guarantee this property. For the ``isometric" case, i.e., when the uniformly quasi-greedy constant is $1$, we prove that 1-uniformly quasi-greediness implies disjointness if the underlying norm is strictly monotone. We also introduce a stronger notion of quasi-greediness by requiring uniform order boundedness of the greedy sums over greedy orderings. The characterization and properties of such bases are derived along the lines of uniformly quasi-greedy bases.
\end{abstract}

\maketitle
\tableofcontents

\section{Introduction}
Given a Schauder basis $\en$ of a Banach space $X$, the greedy algorithm is a non-linear approximation scheme that approximates any vector $x\in X$ by its coordinates with the largest magnitude. Such a scheme is widely used in signal processing, where a low rank approximation is often sought for a given signal in an infinite or high dimensional Banach space. For an orthonormal basis, it is clear that the greedy approximation converges optimally in norm. More generally, we call a basis $\en$ \textbf{greedy} if the greedy approximation is optimal up to a multiplicative constant, and \textbf{quasi-greedy} if the greedy approximation of all $x\in X$ converges in norm to $x$. The characterization and properties of such bases have been studied comprehensively, see~\cite[Ch.~10]{TopicsBanach} for a summary. \par
Besides norm convergence, it is also of practical interest to require point-wise convergence. This notion of convergence nicely fits into the setting of Banach lattices, since dominated almost everywhere convergence corresponds to order convergence in function spaces. For quasi-greedy bases, the requirement of order convergence of greedy approximation leads to the definition of \textbf{uniformly quasi-greedy bases}, which was recently introduced in~\cite{UQGBases}. Most properties of quasi-greedy bases can be generalized to uniformly quasi-greedy bases, in the sense that if we apply the results for the ambient Banach lattice $C(\interval{0}{1})$, the standard results on quasi-greedy bases are recovered. Some properties, however, still remain mysterious. For example, it is not clear whether uniformly quasi-greediness is stable under any scaling that is bounded and bounded away from 0, which, for quasi-greedy bases, is a consequence of unconditionality for constant coefficients. One may also wonder if 1-uniformly quasi-greedy bases have upgraded properties, since 1-quasi-greediness implies 1-suppression-unconditionality~\cite{1QG}. In this paper, we give partial results on these refined properties of uniformly quasi-greedy bases. In particular, we show that unconditional uniformly quasi-greedy bases are stable under any scaling with bounded quotient, and that 1-uniformly quasi-greedy bases are disjoint if the norm is strictly monotone. We also introduce a stronger notion of uniform quasi-greediness, termed \textbf{absolute quasi-greediness}, where the greedy approximation of an element $x\in X$ is uniformly order bounded over all greedy orderings. The characterization and stability under scaling of these bases are developed along the lines of uniformly quasi-greedy bases. \par
The rest of the paper is organized as follows. In Section 2, we recall some necessary terminology in Banach lattices. In Section 3, we begin with results on the stability under scaling of uniformly quasi-greediness, followed by a result on 1-uniformly quasi-greedy bases, and end with the characterization and properties of absolutely quasi-greedy bases.

\section{Preliminaries}
Throughout the paper, all Banach spaces are over $\R$.
\subsection{Notions of convergence}
Let $X$ be a vector lattice and $(x_{\alpha})_{\alpha\in A}\subset X$ be a net. We say that $(x_{\alpha})$ decreases to $0$, denoted as $x_{\alpha}\downarrow 0$, if $(x_{\alpha})$ is decreasing and $\inf \{x_{\alpha}\} =0$. $X$ is called \textbf{Archimedean} if $(n^{-1} e)_{n\in \N}\downarrow 0$ for any positive vector $e\in X$. Let us recall the following notions of convergence in an Archimedean vector lattice
\begin{itemize}
    \item $(x_{\alpha})$ is said to \textbf{converge uniformly} to $x\in X$, denoted as $x_{\alpha}\xrightarrow{u}x$, if there exists $e\in X$ such that
    \begin{equation*}
        \forall \epsilon >0:\exists \alpha_0\in A:\; \alpha\geq \alpha_0 \implies \abs{x_{\alpha}-x}\leq \epsilon e.
    \end{equation*}
    \item $(x_{\alpha})$ is said to \textbf{converge in order} to $x\in X$, denoted as $x_{\alpha}\xrightarrow{o}x$, if there exists a net $(u_{\gamma})$ with a possibly different index set $B$ such that $u_{\gamma}\downarrow 0$ and
    \begin{equation*}
        \forall \gamma \in B: \exists \alpha_0\in A:\; \alpha\geq \alpha_0 \implies \abs{x_{\alpha}-x}\leq u_\gamma.
    \end{equation*}
    \item A sequence $(x_n)$ is said to \textbf{$\sigma$-order converge} to $x$, denoted as $x_n\xrightarrow{\sigma o}x$, if there exists a sequence $(u_n)$ such that $u_n\downarrow 0$ and $\abs{x_n-x}\leq u_n, \forall n\in\N$. 
\end{itemize}
It is easy to see that
\begin{equation*}
    x_n\xrightarrow{u}x \implies x_n\xrightarrow{\sigma o}x \implies x_n\xrightarrow{o}x.
\end{equation*}
It can be shown that $\sigma$-order convergence and order convergence agree on $\sigma$-order complete vector lattices; in general, however, the converse of none of the above implications holds true. In a Banach lattice, uniform convergence also implies norm convergence. Norm convergent sequences are in general not order convergent, but there always exists a uniformly convergent subsequence.
\subsection{Notions of bases}
Recall that a sequence $\en$ in a Banach space $X$ is called a \textbf{(Schauder) basis} of $X$ if, for all $x\in X$, there exists a unique series expansion $x=\sum_{n=1}^{\infty} a_n e_n$ converging in norm. For each $m\in\N$, we can then define the \textbf{$\mathbf{m}$-th basis projection} $P_m:X\rightarrow X$ via $P_m(\sum_{n=1}^{\infty}a_n e_n)\coloneqq \sum_{n=1}^{m}a_n e_n$ and the \textbf{biorthogonal functional} $e^*_m:X\rightarrow \R$ via $e^*_m(\sum_{n=1}^{\infty}a_n e_n) \coloneqq a_m$. It is known that the $P_m$'s are uniformly bounded and the number $K\coloneqq \norm{P_m}$ is called the \textbf{basis constant} of $\en$. A sequence $\en$ is called a \textbf{(Schauder) basic sequence} if it is a Schauder basis of its closed linear span $[\en]$, in which case the $P_m$'s and $e_m^*$'s are defined on $[\en]$. It is a standard fact that a sequence of non-zero vectors $\en$ is basic if and only if there exists $C\geq 1$ such that 
\begin{equation}\label{max ineq:basic}
    \bigvee_{1\leq n\leq m}\norm{\sum_{k=1}^n\alpha_k e_k}\leq C\norm{\sum_{k=1}^m\alpha_k e_k}
\end{equation}
for all $m \in \N$ and all scalars $\alpha_1, \ldots, \alpha_m$. The least such constant $C$ is called the \textbf{basis constant} of $\en$ and denoted as $K_b$. A sequence $\en$ is called \textbf{unconditional} if every permutation of $\en$ is a basic sequence, which is known to be characterized by the existence of a constant $C\geq 1$ such that 
\begin{equation}\label{max ineq:uncond}
    \bigvee_{\epsilon_k=\pm 1}\norm{\sum_{k=1}^m \epsilon_k\alpha_k e_k}\leq C\norm{\sum_{k=1}^m\alpha_k e_k}
\end{equation}
for all $m \in \N$ and all scalars $\alpha_1, \ldots, \alpha_m$. We denote the least such constant $C$ as $K_u$ and refer to it as the \textbf{unconditional constant} of $\en$. By the convexity of the norm, we have the following property of an unconditional sequence $\en$ for arbitrary $m\in \N$ and scalars $(\alpha_k)_{k=1}^m$
\begin{equation}\label{eq:uncond:property}
    \abs{\beta_k}\leq \abs{\alpha_k},\, \forall 1\leq k\leq m
    \implies
    \norm{\sum_{k=1}^m \beta_k e_k} \leq K \norm{\sum_{k=1}^m \alpha_k e_k}.
\end{equation}
\par 
Let us now fix $\en$ to be a semi-normalized Schauder basis of a Banach space $E$ and let $x\in E$. A permutation $\pi$ of $\N$ is called a \textbf{greedy ordering} of $x$ if $(\abs{e^*_{\pi(n)}(x)})$ is non-increasing. The \textbf{$\mathbf{m}$-th greedy sum} of $x$ associated to $\pi$ is defined as $G_{\pi,m}(x) \coloneqq \sum_{n=1}^m e^*_{\pi(n)}(x)e_{\pi(n)}$ and the sequence $(G_{\pi,m}(x))_{m=1}^{\infty}$ is called the \textbf{greedy approximation} of $x$ associated to $\pi$. It is clear that $x$ can have different greedy orderings, and $G_{\pi,m}(x)$ may be different from $G_{\rho,m}(x)$ for a fixed $m\in\N$ and different greedy orderings $\pi$ and $\rho$. We call $G_{\pi,m}(x)$ a \textbf{strictly greedy sum} of $x$ of order $m$ if the sum is independent of the greedy ordering or, equivalently, if $\abs{e^*_{\pi(m)}(x)} > \abs{e^*_{\pi(m+1)}(x)}$ or $e^*_{\pi(m)}(x)=0$. The \textbf{natural greedy ordering} of $x$ is the canonical greedy ordering induced by the basis, namely the permutation $\sigma$ of $\N$ such that if $j<k$ then either $\abs{e^*_{\sigma(j)}(x)}>\abs{e^*_{\sigma(k)}(x)}$ or $\abs{e^*_{\sigma(j)}(x)}=\abs{e^*_{\sigma(k)}(x)}$ and $\sigma(j)<\sigma(k)$. The \textbf{$\mathbf{m}$-th natural greedy sum} of $x$ is then defined as $\mathcal{G}_m(x)\coloneqq G_{\sigma,m}(x)$. We call $\en$ \textbf{quasi-greedy} if $\mathcal{G}_m(x)\xrightarrow{\norm{\cdot}} x$ for all $x\in E$, which is characterized by the following  inequality similar to~\eqref{max ineq:basic}
\begin{equation}\label{max ineq:qg}
    \exists C\geq 1: \forall m\in \N, \, x\in E: \norm{\mathcal{G}_m(x)}\leq C\norm{x}.
\end{equation}
The least such constant $C$ is called the \textbf{quasi-greedy constant} of $\en$ and is denoted as $K_{qg}$. It is known that the definition and characterization of quasi-greediness do not depend on the choice of greedy ordering~\cite[Lemma 10.2.5, Lemma 10.2.6]{TopicsBanach}, in the sense that one can equivalently formulate them if the natural greedy ordering is replaced by the existence of a greedy ordering or the requirement for all greedy orderings. If $E$ is embedded in a Banach lattice $X$, we obtain the definition of \textbf{uniformly quasi-greedy bases} by enforcing uniform convergence of $\mathcal{G}_m(x)$ to $x$. The following theorem is proven in~\cite{UQGBases}.
\begin{thm}\label{Charac:UQG}
  Let $\en$ be a semi-normalized basis of $E\subset X$. TFAE:
  \begin{enumerate}
    \item For all $x\in E$, $\mathcal{G}_n(x)\xrightarrow{u}x$;\label{Charac:UQG:i}
    \item For all $x\in E$, $\mathcal{G}_n(x)\xrightarrow{\sigma o}x$;\label{Charac:UQG:ii}
    \item For all $x\in E$, $\mathcal{G}_n(x)\xrightarrow{o}x$;\label{Charac:UQG:iii}
    \item For all $x\in E$, there exists $u\in X$ such that $\abs{\mathcal{G}_n(x)}\leq u$ for all $n\in\N$;\label{Charac:UQG:iv}
    \item For all $x\in E$, there exists $C\geq 0$ such that $\norm{\bigvee_{n=1}^m\abs{\mathcal{G}_n(x)}}\leq C$ for all $m\in\N$;\label{Charac:UQG:v}
    \item There exists $C\geq 1$ such that
    \begin{equation}\label{max ineq:UQG}
      \norm{\bigvee_{n=1}^m\abs{\mathcal{G}_n(x)}}\leq C\norm{x}
    \end{equation}
    for all $m\in\N$ and $x\in E$.\label{Charac:UQG:vi}
  \end{enumerate}
\end{thm}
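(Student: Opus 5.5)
The plan is to run the cycle \ref{Charac:UQG:i}$\Rightarrow$\ref{Charac:UQG:ii}$\Rightarrow$\ref{Charac:UQG:iii}$\Rightarrow$\ref{Charac:UQG:iv}$\Rightarrow$\ref{Charac:UQG:v}$\Rightarrow$\ref{Charac:UQG:vi}$\Rightarrow$\ref{Charac:UQG:i}.

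\textbf{The easy implications.}
\begin{itemize}
\item \ref{Charac:UQG:i}$\Rightarrow$\ref{Charac:UQG:ii}$\Rightarrow$\ref{Charac:UQG:iii} were recorded in the preliminaries.
\item For \ref{Charac:UQG:iii}$\Rightarrow$\ref{Charac:UQG:iv}: order convergence gives some $u_{\gamma_0}$ and some $N$ with $\abs{\mathcal{G}_n(x)-x}\le u_{\gamma_0}$ for all $n\ge N$. Put $u=\abs{x}+u_{\gamma_0}+\sum_{n<N}\abs{\mathcal{G}_n(x)}$; this dominates every $\abs{\mathcal{G}_n(x)}$.
\item For \ref{Charac:UQG:iv}$\Rightarrow$\ref{Charac:UQG:v}: lattice norms are monotone, so $\norm{\bigvee_{n\le m}\abs{\mathcal{G}_n(x)}}\le\norm{u}$.
\end{itemize}

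\textbf{From \ref{Charac:UQG:v} to \ref{Charac:UQG:vi}.} This is a uniform boundedness argument, done by contradiction as in the proof of \eqref{max ineq:qg}. Suppose no uniform $C$ exists. Then there are vectors $x_k$ with $\norm{x_k}$ small and $\norm{\bigvee_{n\le m_k}\abs{\mathcal{G}_n(x_k)}}$ huge.

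First reduce to finitely supported $x_k$ with distinct absolute coefficients. This is possible because each $\mathcal{G}_n$ has finite range, and small perturbations change the natural greedy sums only in a controlled way; the standard perturbation lemmas from \cite[Ch.~10]{TopicsBanach} apply.

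Next glue the pieces into a single vector $x=\sum_k y_k$. Here $y_k$ is $x_k$ rescaled and shifted to supports far out and disjoint from one another. The scales are chosen so that:
\begin{itemize}
\item the series converges;
\item the coefficients of $y_k$ all dominate those of $y_{k+1}$.
\end{itemize}
With this choice, the greedy sums of $x$ run through $y_1$, then $y_2$, and so on, and each partial greedy sum equals $y_1+\dots+y_{k-1}+\mathcal{G}_n(y_k)$. Then
\[
\bigvee_{n}\abs{\mathcal{G}_n(x)}\ \ge\ \bigvee_{n\le m_k}\abs{\sum_{j<k}y_j+\mathcal{G}_n(y_k)}
\]
stays large in norm once the earlier blocks are absorbed. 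This contradicts \ref{Charac:UQG:v}.

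\textbf{The main obstacle} is the shifting step. The basis is not assumed to be shift-invariant, so moving $x_k$ to a far support is not a norm-preserving operation. I would avoid shifting altogether and argue instead with a gliding hump inside $E$:
\begin{itemize}
\item By \ref{Charac:UQG:v} applied to finitely supported vectors, the map $x\mapsto\bigvee_{n\le m}\abs{\mathcal{G}_n(x)}$ is bounded on $[e_1,\dots,e_N]$ for each fixed $N$.
\item So a blow-up must come from tails. Pick $x_k$ supported beyond the previous supports, using quasi-greediness and the basis projections to control the heads.
\item The maps $x\mapsto\mathcal{G}_n(x)$ are not linear, so the Banach--Steinhaus theorem does not apply directly. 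The substitute is a sup-of-finitely-many-seminorms argument over the finitely many possible greedy sets, carried out on each finite block.
\end{itemize}

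\textbf{From \ref{Charac:UQG:vi} to \ref{Charac:UQG:i}.} Fix $x$ and $\epsilon>0$, and choose finitely supported $z$ with $\norm{x-z}$ small. For $n$ large, $\mathcal{G}_n(x)-x$ is controlled by greedy sums of the tail $x-z$ up to a bounded perturbation. The inequality \eqref{max ineq:UQG} then gives
\[
\norm{\bigvee_{n\ge n_0}\abs{\mathcal{G}_n(x)-x}}\lesssim\norm{x-z}.
\]
Pick $z_k$ with $\norm{x-z_k}\le 4^{-k}$. Summing, $e=\sum_k 2^k\bigvee_{n\ge n_k}\abs{\mathcal{G}_n(x)-x}$ converges in $X$, and it witnesses uniform convergence, since $\abs{\mathcal{G}_n(x)-x}\le 2^{-k}e$ for $n\ge n_k$.
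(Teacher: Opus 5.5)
\textbf{The gap is in \ref{Charac:UQG:vi}$\Rightarrow$\ref{Charac:UQG:i}.} Both your estimate $\norm{\bigvee_{n\ge n_0}\abs{\mathcal{G}_n(x)-x}}\lesssim\norm{x-z}$ and your witness $e=\sum_k 2^k\bigvee_{n\ge n_k}\abs{\mathcal{G}_n(x)-x}$ use suprema of \emph{infinitely many} elements. In an arbitrary Banach lattice such suprema need not exist; already in $C[0,1]$ a norm-bounded increasing sequence can fail to have a supremum. Moreover, \eqref{max ineq:UQG} only controls norms of \emph{finite} suprema. Passing to the tail $x-z$ does not help. For all $n\ge n_0$ at once you need one element of $X$ dominating every greedy sum of $x-z$. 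That is statement \ref{Charac:UQG:iv} for $x-z$, i.e.\ exactly the step from norm bounds on finite suprema to an order bound that this implication is supposed to supply.

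The missing idea is to cut the greedy expansion of $x$ into finitely supported blocks. Theorem~\ref{Charac:UQG} is quoted from \cite{UQGBases}, but the paper does exactly this in the parallel implication of Theorem~\ref{UniformGO:Charac}. Since \eqref{max ineq:UQG} gives $\norm{\mathcal{G}_m(x)}\le C\norm{x}$, the basis is quasi-greedy and $\mathcal{G}_m(x)\to x$ in norm. So you can choose $m_1<m_2<\cdots$ such that $y_k\coloneqq\mathcal{G}_{m_{k+1}}(x)-\mathcal{G}_{m_k}(x)$ satisfies $\norm{y_k}\le 4^{-k}$. Each $y_k$ is finitely supported. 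The natural greedy ordering of $x$ restricted to $\supp(y_k)$ is the natural greedy ordering of $y_k$, so for $m_k\le n\le m_{k+1}$ the difference $\mathcal{G}_n(x)-\mathcal{G}_{m_k}(x)$ is a natural greedy sum of $y_k$. Set $u_k\coloneqq\mathcal{G}^{\vee}_{\abs{\supp(y_k)}}(y_k)$. This is a \emph{finite} supremum with $\norm{u_k}\le C4^{-k}$ and $\abs{y_k}\le u_k$, so $e\coloneqq\sum_k 2^k u_k$ converges in $X$. For $m_k\le n<m_{k+1}$ you then get
\[
\abs{\mathcal{G}_n(x)-x}\le\abs{\mathcal{G}_n(x)-\mathcal{G}_{m_k}(x)}+\sum_{j\ge k}\abs{y_j}\le u_k+\sum_{j\ge k}u_j\le 2^{1-k}e .
\]

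Your gliding-hump outline for \ref{Charac:UQG:v}$\Rightarrow$\ref{Charac:UQG:vi} has the right shape and matches the paper's scheme. Several justifications should be replaced, though:
\begin{enumerate}
\item Greedy sums are discontinuous, so the claim that small perturbations change them ``in a controlled way'' is false. It is also unnecessary: reduce to finite support via $x\mapsto P_Nx$ for large $N$, which leaves $\mathcal{G}_1(x),\dots,\mathcal{G}_m(x)$ unchanged. Distinct coefficients are not needed.
\item The passage to tails needs only $\mathcal{G}^{\vee}_m(x)\le\abs{e_1^*(x)e_1}+\mathcal{G}^{\vee}_m(x-e_1^*(x)e_1)$ and $\norm{e_1^*(x)e_1}\le K_b\norm{x}$, iterated. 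It does not need quasi-greediness (which you have not yet established at that stage) or any seminorm argument.
\end{enumerate}
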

The least constant satisfying~\eqref{max ineq:UQG} is referred to as the \textbf{uniformly quasi-greedy constant} of $\en$ and is denoted as $K_{uqg}$. It can be shown that the formulations of~\ref{Charac:UQG:i}--\ref{Charac:UQG:vi} in the above theorem do not depend on the choice of greedy orderings either. For the brevity of notation, we will fix the set-up of a Banach space $E$ embedded in a Banach lattice $X$ and consider bases of $E$. Let us define 
\begin{equation*}
  G_{\pi,m}^{\vee}(x)\coloneqq\bigvee_{n=1}^{m}\abs{G_{\pi,m}(x)}
  =\bigvee_{n=1}^{m}\abs{\sum_{k=1}^n e^*_{\pi(k)}(x)e_{\pi(k)}}, 
  \quad 
  \mathcal{G}_{m}^{\vee}(x)\coloneqq\bigvee_{n=1}^m \abs{\mathcal{G}_n(x)}
\end{equation*}
for $x\in E$ and $\pi$ a greedy ordering of $x$, and the following sets for any constant $a\geq 1$ 
\begin{alignat*}{2}
  B_{\mu}^a(x)&\coloneqq\Set{n\in\N}{\abs{e^*_n(x)}\in \interval[open left]{\mu}{a\mu}} = \left\{ r^{\mu}_1, r^{\mu}_2, \ldots, r^{\mu}_{l_{\mu}}\right\},
  &&\quad \forall \mu>0,\\
  A_k^a(x)&\coloneqq B_{a^k}^a(x) = \left\{ s^k_1, s^k_2, \ldots, s^k_{t_k}\right\},
  &&\quad\forall k\in\Z,
\end{alignat*}
where $l_{\mu}$ and $t_k$ are the cardinalities of $B_{\mu}^a(x)$ and $A_k^a(x)$, respectively, and the elements are enumerated by the natural order on $\N$. We use tilde to indicate the notation for the scaled basis $\lambdaen$ for a sequence of real numbers $\lambdan$, e.g. $\tilde{G}_{\pi,m}(x)$, $\tilde{B}_{\mu}^a(x)$, $\tilde{A}_k^a(x)$. For sets $A$, $B$ and $C$, we write $A\sqcup B\sqcup C$ for their union and to mean that they are pairwise disjoint.
\section{Main results}
\subsection{Stability under scaling of uniformly quasi-greediness}
For a quasi-greedy basis $\en$, it is known that for any sequence of real numbers $\lambdan$ with $0<\inf_{n\in\mathbb{N}}\abs{\lambda_n}\leq\sup_{n\in\mathbb{N}}\abs{\lambda_n}<\infty$ the scaled basis $\lambdaen$ is still quasi-greedy. For the generalization of this property to uniformly quasi-greediness, we first note that we can WLOG restrict to the case where $1\leq\inf_{n\in\mathbb{N}}\abs{\lambda_n}\leq\sup_{n\in\mathbb{N}}\abs{\lambda_n}\leq a$ for some $a>1$, since any scaling with bounded quotient can be obtained by a combination of finitely many such scalings and a constant scaling. In the case of quasi-greediness, the proof of the stability under scaling~\cite[Prop. 10.2.16]{TopicsBanach} relies on the following facts.
\begin{itemize}
    \item For any $x\in E$, $\mu>0$, it holds that
    \begin{equation}\label{Scaling:QG:1}
        \begin{aligned}
            \Set{n\in \N}{\abs{e^*_n(x)}\geq \sup_{n\in\mathbb{N}}\abs{\lambda_n}\mu}
            &\subset 
            \Set{n\in \N}{\frac{\abs{e^*_n(x)}}{\abs{\lambda_n}}\geq \mu}\\
            &\subset
            \Set{n\in \N}{\abs{e^*_n(x)}\geq \mu}.
        \end{aligned}
    \end{equation}
    \item For any finite set $A\subset \N$ and scalars $(a_n)_{n\in A}$, we have by~\cite[Cor.~10.2.13]{TopicsBanach}
    \begin{equation}\label{Scaling:QG:2}
        \norm{\sum_{n\in B}a_n e_n}
        \leq 
        16 K_{qg}^4 \frac{\max (\abs{a_n})_{n\in A}}{\min (\abs{a_n})_{n\in A}}\norm{\sum_{n\in A}a_n e_n}.
    \end{equation}
\end{itemize}
By~\eqref{Scaling:QG:1}, we can decompose greedy sums of $\lambdaen$ in the following way
\begin{equation*}
    \sum_{\abs{e_n^*(x)}\geq \abs{\lambda_n}\mu}
    e_n^*(x) e_n
    = \sum_{\abs{e_n^*(x)}\geq \sup_{n\in\N}\abs{\lambda_n}\mu} e_n^*(x)e_n
    + \sum_{\abs{\lambda_n}\mu \leq \abs{e_n^*(x)}<\sup_{n\in\N}\abs{\lambda_n}\mu} e_n^*(x)e_n.
\end{equation*}
The first term on the right-hand side is controlled by~\eqref{max ineq:qg}, and the second by~\eqref{Scaling:QG:2} and~\eqref{max ineq:qg}. If we try to establish uniformly quasi-greediness of the scaled basis by the same decomposition, we are faced with the stronger condition that the second term on the right needs to be uniformly order bounded over all $\mu>0$. Since the scaling has a bounded quotient, it suffices to consider ``dyadic'' segments of the basis expansion of $x$. By choosing the scaling freely, we arrive at the following characterization of the stability under scaling of uniformly quasi-greediness, which extends a result from the author's Bachelor thesis~\cite[Prop.~4.15]{BibasicUQGThesis}.\par
\begin{prop}\label{Scaling:Equiv:1}
  Let $\en$ be a uniformly quasi-greedy basis and $a>1$. TFAE
  \begin{enumerate}
    \item For all $\lambdan$ with $1\leq\abs{\lambda_n}\leq a$, $\lambdaen$ is uniformly quasi-greedy;\label{Scaling:Equiv:1:i}
    \item For all $x\in E$ and any permutation $\pi:\N \rightarrow\N$ such that $\pi(A_k^a(x))=A_k^a(x)$ for all $k\in \Z$, there exists $u\in X$ such that 
    \begin{equation*}
      \forall k\in\Z:\;\bigvee_{n=1}^{t_k}\abs{\sum_{i=1}^{n} e^*_{\pi(s_i^k)}(x)e_{\pi(s_i^k)}}\leq u.
    \end{equation*}\label{Scaling:Equiv:1:ii}
    \item For all $x\in E$ and any permutation $\pi:\N \rightarrow\N$ such that $\pi(A_k^a(x))=A_k^a(x)$ for all $k\in \Z$, there exists $C>0$ such that 
    \begin{equation*}
      \forall N \in \N:\; \norm{\bigvee_{k=-N}^{N}\bigvee_{n=1}^{t_k}\abs{\sum_{i=1}^{n} e^*_{\pi(s_i^k)}(x)e_{\pi(s_i^k)}}}\leq C.
    \end{equation*}\label{Scaling:Equiv:1:iii}
  \end{enumerate}
\end{prop}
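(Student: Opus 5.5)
We prove the cycle \ref{Scaling:Equiv:1:ii}$\Rightarrow$\ref{Scaling:Equiv:1:iii}$\Rightarrow$\ref{Scaling:Equiv:1:i}$\Rightarrow$\ref{Scaling:Equiv:1:ii}. The step \ref{Scaling:Equiv:1:ii}$\Rightarrow$\ref{Scaling:Equiv:1:iii} is immediate: take $C=\norm{u}$, since the supremum over $k\in\{-N,\dots,N\}$ is still dominated by $u$. This order lets us avoid upgrading a norm bound directly to an order bound. Instead we pass through the scaled basis and let Theorem~\ref{Charac:UQG} do that work.

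For \ref{Scaling:Equiv:1:iii}$\Rightarrow$\ref{Scaling:Equiv:1:i}, fix $\lambdan$ with $1\leq\abs{\lambda_n}\leq a$. Then $\lambdaen$ is again a semi-normalized basis of $E$, and $x=\sum_n (e_n^*(x)/\lambda_n)\lambda_ne_n$. A greedy sum of $x$ for the scaled basis has the form $\sum_{n\in S}e_n^*(x)e_n$ with $S$ an initial segment of the scaled greedy ordering $\tilde\sigma$. By the inclusions~\eqref{Scaling:QG:1} at the level $t$ of the last chosen index, $S$ contains every $n$ with $\abs{e_n^*(x)}>at$ and is contained in $\{\abs{e_n^*(x)}\geq t\}$. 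Hence $S$ consists of all of $A_j^a(x)$ for $j>k+1$, say, together with subsets of at most two consecutive blocks $A_k^a(x)$ and $A_{k+1}^a(x)$. We will check the exact range of blocks with care.

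The union of the full blocks is a greedy set for $\en$. Let $\pi$ be the permutation that reorders each block $A_k^a(x)$ according to $\tilde\sigma$ and is the identity off the support of $x$; it preserves every $A_k^a(x)$. Within each block the part of $S$ is then an initial segment $\{\pi(s^k_1),\dots,\pi(s^k_n)\}$. Consequently each $\abs{\tilde{\mathcal G}_m(x)}$ is bounded by $\abs{\mathcal G_{m'}(x)}$ plus two of the block maxima occurring in \ref{Scaling:Equiv:1:iii}.

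Taking suprema over $m\leq M$ and norms gives
\[
\norm{\tilde{\mathcal G}^\vee_M(x)}\leq K_{uqg}\norm{x}+2C(x)
\]
uniformly in $M$, where $C(x)$ is the constant from \ref{Scaling:Equiv:1:iii} for this $\pi$. This is condition~\ref{Charac:UQG:v} of Theorem~\ref{Charac:UQG} for $\lambdaen$, so $\lambdaen$ is uniformly quasi-greedy.

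For \ref{Scaling:Equiv:1:i}$\Rightarrow$\ref{Scaling:Equiv:1:ii}, fix $x$ and a block-preserving permutation $\pi$. Each block $A_k^a(x)$ is finite, so $m_k\coloneqq\min_{n\in A_k^a(x)}\abs{e_n^*(x)}>a^k$. We choose values $\beta_{\pi(s^k_1)}>\beta_{\pi(s^k_2)}>\dots>\beta_{\pi(s^k_{t_k})}$ in $\interval[open left]{a^k}{m_k}$ and set $\lambda_n\coloneqq\abs{e_n^*(x)}/\beta_n$ on the support of $x$, and $\lambda_n\coloneqq1$ elsewhere. Then $\beta_n\leq m_k\leq\abs{e_n^*(x)}$ gives $\lambda_n\geq1$. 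Also $\beta_n>a^k\geq\abs{e^*_n(x)}/a$ for $n\in A_k^a(x)$ gives $\lambda_n\leq a$.

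The scaled coefficients $\abs{e_n^*(x)}/\lambda_n=\beta_n$ are strictly separated between blocks, since block $k$ lies in $\interval[open left]{a^k}{a^{k+1}}$, and they are strictly decreasing along $\pi$ within each block. So the greedy ordering of $x$ for $\lambdaen$ runs through the blocks in decreasing $k$, and within each block in $\pi$-order, without ties. Every partial sum $\sum_{i=1}^n e^*_{\pi(s^k_i)}(x)e_{\pi(s^k_i)}$ is therefore a difference of two greedy sums $\tilde{\mathcal G}_{m}(x)-\tilde{\mathcal G}_{m'}(x)$ of the scaled basis, because $\lambda_n$ cancels in each term. Theorem~\ref{Charac:UQG}\ref{Charac:UQG:iv} applied to $\lambdaen$ gives $u'$ with $\abs{\tilde{\mathcal G}_m(x)}\leq u'$ for all $m$, and $u\coloneqq2u'$ works.

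The main obstacle is the bookkeeping in \ref{Scaling:Equiv:1:iii}$\Rightarrow$\ref{Scaling:Equiv:1:i}. We must verify that a scaled greedy set splits into an original greedy set plus initial $\pi$-segments of boundedly many blocks, with one fixed $\pi$ working for all $m$ simultaneously. This holds because $\pi$ is defined from $\tilde\sigma$ once, before any $m$ is chosen. In \ref{Scaling:Equiv:1:i}$\Rightarrow$\ref{Scaling:Equiv:1:ii} the delicate point is instead keeping $\lambda_n$ inside $[1,a]$, which is exactly why the $\beta_n$ are placed in $\interval[open left]{a^k}{m_k}$ rather than at fixed levels.
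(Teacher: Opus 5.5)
Your proposal is correct and follows the paper's proof essentially step by step. It uses the same cycle (ii)$\Rightarrow$(iii)$\Rightarrow$(i)$\Rightarrow$(ii). It uses the same decomposition of a scaled greedy set into $\Set{n\in\N}{\abs{e^*_n(x)}>a^{k+2}}$ plus initial $\pi$-segments of $A^a_{k+1}(x)$ and $A^a_k(x)$, with $\pi$ fixed once from the scaled greedy ordering. It also uses the same rescaling trick that makes $\abs{e_n^*(x)}/\lambda_n$ separated between blocks.

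The only deviation is that you take strictly decreasing values $\beta_n$ within each block, instead of the paper's constant value $\min_{A^a_k(x)}\abs{e^*_n(x)}$. This removes ties, so the natural greedy ordering of $x$ for $\lambdaen$ already follows $\pi$. As a result you need Theorem~\ref{Charac:UQG} only for natural greedy sums, rather than the paper's remark that it holds for an arbitrary greedy ordering $\pi'$.
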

\begin{proof}
  Assume that~\ref{Scaling:Equiv:1:i} holds. Let $x\in E$ and $\pi$ be a permutation such that $\pi(A_k^a(x))=A_k^a(x)$ for all $k\in \Z$. Since $(A_k^a(x))_{k\in\Z}$ is a partition of $\supp(x)$, we can define $\lambdan$ via
  \begin{equation*}
    \lambda_n \coloneqq
    \begin{dcases}
        \frac{\abs{e^*_n(x)}}{\min_{n\in A_k^a(x)}(\abs{e^*_n(x)})} 
        & \text{ if } n\in A_k^a,\\
        1 & \text{ if } n\notin \supp(x).
    \end{dcases}
  \end{equation*}
  This implies that $\frac{\abs{e^*_n(x)}}{\lambda_n}=\min_{n\in A_k^a(x)}(\abs{e^*_n(x)})$ for all $n \in A_k^a(x)$. In particular, $\frac{\abs{e^*_n(x)}}{\lambda_n}$ is constant on $A_k^a(x)$ and $\pi$ is a greedy ordering of $x$ with respect to $\lambdaen$ on $A_k^a(x)$. By rearranging $\pi$, we get a global greedy ordering $\pi^{\prime}$ of $x$ with respect to $\lambdaen$ that keeps the ordering of $\pi$ on $A_k^a(x)$ for all $k\in\Z$. By assumption, $\lambdaen$ is uniformly quasi-greedy, so by Thm.~\ref{Charac:UQG}~\ref{Charac:UQG:iv}, there exists a $u\in X$ such that $\abs{\tilde{G}_{\pi^{\prime},m}(x)}\leq u$ for all $m \in\N$. We conclude by observing that for all $k\in\Z$ and $1\leq n\leq t_k$, $\sum_{i=1}^{n} e^*_{\pi(s_i^k)}(x)e_{\pi(s_i^k)}=\sum_{i=1}^{n} \frac{e^*_{\pi(s_i^k)}(x)}{\lambda_{\pi(s_i^k)}}\lambda_{\pi(s_i^k)}e_{\pi(s_i^k)}$ is the difference of $\tilde{G}_{\pi^{\prime},m}(x)$ and $\tilde{G}_{\pi^{\prime},l}(x)$ for some $m,l\in\N$, which implies that
  \begin{equation*}
    \forall k\in\Z:\forall 1\leq n\leq t_k:\; 
    \abs{\sum_{i=1}^{n} e^*_{\pi(s_i^k)}(x)e_{\pi(s_i^k)}}
    \leq \abs{\tilde{G}_{\pi^{\prime},m}(x)} + \abs{\tilde{G}_{\pi^{\prime},l}(x)}\leq 2u.
  \end{equation*}
  \ref{Scaling:Equiv:1:ii}$\Rightarrow$\ref{Scaling:Equiv:1:iii} is clear. For~\ref{Scaling:Equiv:1:iii}$\Rightarrow$\ref{Scaling:Equiv:1:i}, let $\lambdan$ with $1\leq\inf_{n\in\mathbb{N}}\abs{\lambda_n}\leq\sup_{n\in\mathbb{N}}\abs{\lambda_n}\leq a$. To show that $\lambdaen$ is uniformly quasi-greedy, we resort to Thm.~\ref{Charac:UQG}~\ref{Charac:UQG:v} and show that for all $x\in E$, there is a greedy ordering $\sigma$ of $x$ with respect to $\lambdaen$ and $C>0$ such that $\norm{\tilde{G}_{\sigma,l}^{\vee}(x)}\leq C$ for all $m\in\N$. Let $x\in E$ and $\sigma$ be a greedy ordering of $x$ with respect to $\lambdaen$. For $m \in\N$, let $k\in \Z$ be such that $\abs{\frac{e^*_{\sigma(m)}(x)}{\lambda_{\sigma(m)}}}\in \interval[open left]{a^k}{a^{k+1}}$. We have then
  \begin{align*}
    \Set{n\in\N}{\abs{e^*_n(x)}>a^{k+2}} 
    &\subset \Set{n\in\N}{\abs{\frac{e^*_n(x)}{\lambda_n}}>\abs{\frac{e^*_{\sigma(m)}(x)}{\lambda_{\sigma(m)}}}} \\
    &\subset \sigma(\{1,\ldots,m\}) \\
    &\subset \Set{n\in\N}{\abs{\frac{e^*_n(x)}{\lambda_n}}\geq\abs{\frac{e^*_{\sigma(m)}(x)}{\lambda_{\sigma(m)}}}}
    \subset \Set{n\in\N}{\abs{e^*_n(x)}\geq a^k}
  \end{align*}
  and hence, denoting $\sigma(\{1,\ldots,m\})$ as $S_m$,
  \begin{gather}
    S_m = \Set{n\in\N}{\abs{e^*_n(x)}>a^{k+2}} \sqcup (S_m\cap A^a_{k+1}(x)) \sqcup (S_m\cap A^a_k(x)),\nonumber\\
    \abs{\tilde{G}_{\sigma,m}(x)} 
    \leq \abs{\sum_{\abs{e^*_n(x)}>a^{k+2}} e^*_n(x)e_n} + \abs{\sum_{S_m\cap A^a_{k+1}(x)} e^*_n(x)e_n} + \abs{\sum_{S_m\cap A^a_{k}(x)} e^*_n(x)e_n}.\label{Ineq:1}
  \end{gather}
  The first term in~\eqref{Ineq:1} is a strictly greedy sum with respect to $\en$. Hence, the supremum over finitely many terms in the form of the first term is norm bounded by some $C_1$ by the assumption that $\en$ is uniformly quasi-greedy and Thm.~\ref{Charac:UQG}~\ref{Charac:UQG:v}. For the second and third term, let us define a permutation $\pi$ such that $\pi(A_k^a(x))=A_k^a(x)$ for all $k\in\Z$ and the order of $\pi$ on $A_k^a(x)$ is given by $\sigma$, namely 
  \begin{equation*}
    \forall s^k_i, s^k_j \in A^a_k(x): \sigma^{-1}(s^k_i) > \sigma^{-1}(s^k_j) \implies \pi^{-1}(s^k_i) > \pi^{-1}(s^k_j).
  \end{equation*}
  For all $k\in\Z$, we have
  \begin{equation*}
    \abs{\sum_{S_m\cap A^a_{k}(x)} e^*_n(x)e_n} = 
    \abs{\sum_{i=1}^{\abs{S_m\cap A^a_k(x)}} e^*_{\pi(s_i^k)}(x)e_{\pi(s_i^k)}}.
  \end{equation*}
  Note that the sums on the right-hand side grow in the order of $\pi$ as $m$ increases. Taking supremum over $1\leq m\leq l$, these are norm bounded by some $C$ by assumption, where $C$ is independent of $l$. Hence, taking supremum and then taking norm on both sides of~\eqref{Ineq:1}, we obtain $\norm{\tilde{G}^{\vee}_{\sigma,m}(x)}\leq C_1+2C$ for all $m\in\N$ by the above estimates. 
\end{proof}
Using the above characterization, we obtain the following sufficient condition for the stability under scaling of uniformly quasi-greediness.
\begin{prop}\label{Scaling:Uncond:1}
  Let $\en$ be an unconditional uniformly quasi-greedy basis. Then $\lambdaen$ is uniformly quasi-greedy for all $\lambdan$ with $0<\inf_{n\in\mathbb{N}}\abs{\lambda_n}\leq\sup_{n\in\mathbb{N}}\abs{\lambda_n}<\infty$.
\end{prop}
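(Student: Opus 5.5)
First, I reduce to the case $1\leq\abs{\lambda_n}\leq a$ for a fixed $a>1$, as explained before Prop.~\ref{Scaling:Equiv:1}. A constant scaling $(c\,e_n)$ clearly preserves uniform quasi-greediness, since the greedy orderings and greedy sums are unchanged. A general bounded-quotient scaling is a composition of a constant scaling with finitely many scalings of quotient at most $a$. Unconditionality is preserved under any bounded scaling, because inequality~\eqref{max ineq:uncond} is invariant under multiplying each $e_k$ by $\lambda_k$. So at each step the scaled basis is again unconditional, and by induction it suffices to treat one step. By Prop.~\ref{Scaling:Equiv:1}, it then suffices to verify condition~\ref{Scaling:Equiv:1:ii}, or~\ref{Scaling:Equiv:1:iii}, for an unconditional uniformly quasi-greedy basis $\en$.

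Fix $x\in E$ and a permutation $\pi$ with $\pi(A_k^a(x))=A_k^a(x)$ for all $k$. For each $k$ and $1\leq n\leq t_k$, set $y_{k,n}\coloneqq\sum_{i=1}^{n}e^*_{\pi(s_i^k)}(x)e_{\pi(s_i^k)}$. This is the restriction of $x$ to a subset $F_{k,n}\subset A_k^a(x)$. On $A_k^a(x)$ all coefficients lie in $\interval[open left]{a^k}{a^{k+1}}$ in modulus. The idea is to dominate the partial sums by a single strictly greedy object. Consider the vector $z_k\coloneqq\sum_{n\in A_k^a(x)}\operatorname{sgn}(e^*_n(x))\,a^{k+1}e_n$ together with $x$. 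The aim is to compare $\abs{y_{k,n}}$ pointwise in $X$ with a quantity built from strictly greedy sums of $x$, namely $\mathcal G$-sums at the thresholds $a^{k+1}$ and $a^k$.

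The natural tool is the following. Since $\en$ is unconditional, the map $T_\theta:\sum b_ne_n\mapsto\sum\theta_nb_ne_n$ is bounded for $\abs{\theta_n}\leq1$, with $\norm{T_\theta}\leq K_u$ by~\eqref{eq:uncond:property}. I would write $y_{k,n}=T_{\theta}(x)$ with $\theta=1_{F_{k,n}}$. To get \emph{order} bounds rather than norm bounds, I would try to show that for an unconditional basis the family $\{P_F x: F\subset\N\text{ finite}\}$ is order bounded whenever $x$ admits uniform greedy control. Concretely, take the block $x_k\coloneqq\sum_{A_k^a(x)}e^*_n(x)e_n$, which is the difference of two strictly greedy sums of $x$. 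Theorem~\ref{Charac:UQG} applied to the vector $x_{(k)}$ with modified coefficients $\operatorname{sgn}(e^*_n(x))a^{k+1}$ on $A^a_k(x)$ gives something useful: all coefficients there are equal in modulus, so \emph{every} ordering of $A_k^a$ is greedy. Since Thm.~\ref{Charac:UQG} holds for all greedy orderings, the partial sums along $\pi$ are order bounded.

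The main obstacle is passing from these constant-coefficient vectors back to $x$ while keeping a single $u$ for all $k$ simultaneously. I would handle it by building one global vector $w\coloneqq\sum_k\sum_{n\in A^a_k(x)}\operatorname{sgn}(e^*_n(x))a^{k+1}e_n$. Its coefficients are comparable to those of $x$ within factor $a$, so $w\in E$ by unconditionality, with $\norm{w}\leq aK_u\norm{x}$. The permutation $\pi$, composed with the ordering of levels by decreasing $k$, is a greedy ordering of $w$. Each partial sum $\sum_{i\leq n}$ of $w$ on level $k$ is a difference of two greedy sums of $w$, hence bounded by $2u_w$. It remains to transfer this bound from $w$ to $x$. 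Here $x=T_\theta w$ with $\theta_n=\abs{e^*_n(x)}/a^{k+1}\in\interval[open left]{a^{-1}}{1}$, and the difficulty is that $T_\theta$ does not obviously preserve order bounds. I would attempt this via condition~\ref{Scaling:Equiv:1:iii}, which only needs norm bounds of finite suprema. I would write $\theta$ as an average of sign-type multipliers (convexity, as in~\eqref{eq:uncond:property}), or alternatively compare with the scaled basis $(\theta_n^{-1}e_n)$, and use the lattice triangle inequality $\abs{\sum_j c_jv_j}\leq\sum_j c_j\abs{v_j}$. The finite suprema of the level-wise partial sums of $x$ are then dominated, up to the constant $K_u$, by those of sign-modified copies of $w$. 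Those copies are again constant-modulus on levels and so are controlled by $K_{uqg}$. This step is where I expect to spend the most effort.
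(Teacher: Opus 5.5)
Your proposal is correct and is essentially the paper's proof. Your $w$, with constant modulus $a^{k+1}$ on each $A_k^a(x)$, plays the role of the paper's $\bar{x}$, which has constant modulus $\min_{A_k^a(x)}\abs{e^*_n(x)}$. Your final step is also exactly the paper's argument: write the multipliers $\theta_n\in(a^{-1},1]$ on the finite set $\bigcup_{k=-N}^{N}A_k^a(x)$ as a convex combination of sign vectors, apply the lattice triangle inequality, and bound each sign-modified copy through $K_{uqg}$ and $K_u$ to verify Prop.~\ref{Scaling:Equiv:1}~\ref{Scaling:Equiv:1:iii}. The exploratory detours are not needed; in particular, order boundedness of all $P_Fx$ fails for unconditional bases in general, e.g.\ for the Haar system. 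The induction in your reduction is also superfluous, since $a>1$ is arbitrary and a single step with $a\geq\sup_n\abs{\lambda_n}/\inf_n\abs{\lambda_n}$ suffices.
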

\begin{proof}
  Let $a>1$. We establish~\ref{Scaling:Equiv:1:iii} in Prop.~\ref{Scaling:Equiv:1} by exploiting unconditionality. Let $x\in E$ and $\pi$ a permutation such that $\pi(A^a_k(x)) = A^a_k(x)$ for all $k\in\Z$. Similar to the proof of Prop.~\ref{Scaling:Equiv:1}, we define $\lambdan$ such that $\frac{\abs{e^*_n(x)}}{\lambda_n}=\min_{n\in A_k^a(x)}(\abs{e^*_n(x)})$ for all $n \in A_k^a(x)$, which implies that $\pi$ is a greedy ordering of $x$ with respect to $\lambdaen$ on $A_k^a(x)$. By rearranging $\pi$, we get a global greedy ordering $\pi^{\prime}$ of $x$ with respect to $\lambdaen$ that keeps the ordering of $\pi$ on $A_k^a(x)$ for all $k\in\Z$.\par 
  Since $\lambdan$ is bounded from below and $\en$ is unconditional, $\bar{x}\coloneqq\sum_{n=1}^{\infty}\frac{e^*_n(x)}{\lambda_n} e_n$ is well-defined and we observe that $\pi'$ is also a greedy ordering of $\bar{x}$ with respect to $\en$. Using the assumption that $\en$ is uniformly quasi-greedy, it holds that $\norm{G_{\pi^{\prime},m}^{\vee}(\bar{x})} \leq K_{uqg}\norm{\bar{x}}\leq K_{uqg} K_u\norm{x}$ for all $m\in\N$, where $K_{uqg}$ and $K_u$ denote the uniformly quasi-greedy and unconditional constant of $\en$, respectively. This implies that
  \begin{equation*}
      \forall N \in \N:\; \norm{\bigvee_{k=-N}^{N}\bigvee_{n=1}^{t_k}\abs{\sum_{i=1}^{n} \frac{e^*_{\pi(s_i^k)}(x)}{\lambda_{\pi(s_i^k)}}e_{\pi(s_i^k)}}}\leq 2K_{uqg}K_u\norm{x}
  \end{equation*}
  since the sums in the supremum are differences of greedy sums of $\bar{x}$ associated to $\pi^{\prime}$. Now, by changing the signs of $\lambdan$ on $\supp(x)$ and going through the same argument, we obtain that 
  \begin{equation*}
      \forall N \in \N:\; \norm{\bigvee_{k=-N}^{N}\bigvee_{n=1}^{t_k}\abs{\sum_{i=1}^{n} \frac{\epsilon_{\pi(s_i^k)} e^*_{\pi(s_i^k)}(x)}{\lambda_{\pi(s_i^k)}}e_{\pi(s_i^k)}}}\leq 2K_{uqg}K_u\norm{x}
  \end{equation*}
  for all sign sequences $(\epsilon_n)_{n\in\supp(x)}$. Let $N\in\N$ and denote $K_N\coloneqq \bigcup_{k=-N}^N A^a_k$. Since $\abs{\lambda_n}/a\leq 1$, by~\cite[Thm. 3.13]{BasisPrimer}, there exists $c_l\geq 0$ and $\epsilon_{l}^n$ for $n\in K_N$ and $1\leq l\leq \abs{K_N}+1$ such that 
  \begin{equation*}
    \sum_{l=1}^{\abs{K_N}+1} c_l = 1,\quad 
    \sum_{l=1}^{\abs{K_N}+1} c_l\epsilon_{l}^n = \frac{\lambda_n}{a},\quad \forall n\in K_N.
  \end{equation*}
  By convexity, we obtain that 
  \begin{align*}
    \norm{\bigvee_{k=-N}^{N}\bigvee_{n=1}^{t_k}\abs{\sum_{i=1}^{n} e^*_{\pi(s_i^k)}(x)e_{\pi(s_i^k)}}}
    &= \norm{\bigvee_{k=-N}^{N}\bigvee_{n=1}^{t_k}\abs{\sum_{i=1}^{n} \sum_{l=1}^{\abs{K_m}+1} a c_l\epsilon_{l}^{\pi(s_i^k)}\frac{e^*_{\pi(s_i^k)}(x)}{\lambda_{\pi(s_i^k)}}e_{\pi(s_i^k)}}}\\
    &\leq \norm{\bigvee_{k=-N}^{N}\bigvee_{n=1}^{t_k} \sum_{l=1}^{\abs{K_m}+1} a c_l \abs{\sum_{i=1}^{n} \epsilon_{l}^{\pi(s_i^k)}\frac{e^*_{\pi(s_i^k)}(x)}{\lambda_{\pi(s_i^k)}}e_{\pi(s_i^k)}}}\\
    &\leq \norm{\sum_{l=1}^{\abs{K_m}+1} a c_l \bigvee_{k=-N}^{N}\bigvee_{n=1}^{t_k} \abs{\sum_{i=1}^{n} \epsilon_{l}^{\pi(s_i^k)}\frac{e^*_{\pi(s_i^k)}(x)}{\lambda_{\pi(s_i^k)}}e_{\pi(s_i^k)}}}\\
    &\leq \sum_{l=1}^{\abs{K_m}+1} a c_l \norm{ \bigvee_{k=-N}^{N}\bigvee_{n=1}^{t_k} \abs{\sum_{i=1}^{n} \epsilon_{l}^{\pi(s_i^k)}\frac{e^*_{\pi(s_i^k)}(x)}{\lambda_{\pi(s_i^k)}}e_{\pi(s_i^k)}}}\\
    &\leq 2a K_{uqg}K_u\norm{x}.\qedhere
  \end{align*}
\end{proof}
\begin{ex}
Tao~\cite{WaveletTao} showed that the strictly greedy sums of compactly supported wavelets in $L^p(\interval{0}{1})$, $1<p<\infty$ are uniformly bounded by the Hardy-Littlewood maximal function. Hence, by Thm.~\ref{Charac:UQG}~\ref{Charac:UQG:iv} and a perturbation argument, these bases are always uniformly quasi-greedy. In particular, the (normalized) Haar system forms an unconditional and uniformly quasi-greedy basis of $L^p(\interval{0}{1})$, $1<p<\infty$. By Prop.~\ref{Scaling:Uncond:1}, the scaled Haar system is still uniformly quasi-greedy for all scalings with bounded quotient.
\end{ex}
It is clear that unconditionality is not necessary for the stability under scaling of uniformly quasi-greediness, since any conditional quasi-greedy basis (see e.g.~\cite{L1AQG}) can be embedded into $C(\interval{0}{1}$, where the notions of quasi-greedy bases and uniformly quasi-greedy bases coincide. We now give a necessary condition for the stability under scaling of uniformly quasi-greediness, which was also presented in the author's Bachelor thesis~\cite[Prop.~4.14]{BibasicUQGThesis}.
\begin{prop}\label{Scaling:Property:1}
  If $\lambdaen$ is uniformly quasi-greedy for all $\lambdan$ with $1\leq\abs{\lambda_n}\leq a$, then the supremum of the uniformly quasi-greedy constants over all such $\lambdaen$ is finite. 
\end{prop}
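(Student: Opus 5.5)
Argue by contradiction with a gliding-hump construction. Suppose the supremum of $K_{uqg}$ over all scalings $\lambdaen$ with $1\leq\abs{\lambda_n}\leq a$ is infinite. We will build a single admissible scaling $\lambdan$ such that $\lambdaen$ fails to be uniformly quasi-greedy. This contradicts the hypothesis.

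\textbf{Step 1 (finitely supported witnesses).} For each $j\in\N$ pick a scaling $\lambda^{(j)}$, a vector $x_j\in E$ and $m_j\in\N$ with
\begin{equation*}
\norm{\tilde{\mathcal{G}}^{(j),\vee}_{m_j}(x_j)}\geq 4^j\norm{x_j}.
\end{equation*}
Only finitely many coordinates of $x_j$ enter the first $m_j$ greedy sums. Perturbing the remaining tail by a small amount, and using that $\en$ is a basis with $\abs{\lambda_n}\in[1,a]$, we may assume $x_j$ is finitely supported with support in a finite set $F_j$. The large ratio survives the perturbation, and the greedy sums up to $m_j$ stay the same if we first make the retained coefficients strictly separated from the small tail. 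Normalize $\norm{x_j}=2^{-j}$, so that the maximal function has norm at least $2^j$.

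\textbf{Step 2 (making the supports disjoint and compatible).} We need the supports $F_j$ to be pairwise disjoint. We would like to obtain this by induction, choosing each new witness supported beyond $\max F_{j-1}$. The justification is that the blow-up persists on tails. The maps $I-P_N$ are uniformly bounded by $K_b+1$, and the first $N$ coefficients contribute a uniformly quasi-greedy part: on a finite block all scalings form a compact family, so on $\mathrm{span}\{e_1,\dots,e_N\}$ the constants $K_{uqg}$ are uniformly bounded. Hence the supremum of the constants restricted to vectors supported in $[N+1,\infty)$ is still infinite.

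Next, rescale each $x_j$ by a suitable power $a^{p_j}$, which leaves greedy orderings unchanged. Choose the exponents so that the coefficient ranges of different blocks lie in disjoint, decreasing dyadic windows. Concretely, all coefficients of $x_{j+1}$ should be smaller than all coefficients of $x_j$, divided by $a$. Then for $x=\sum_j x_j$, which converges since $\sum 2^{-j}<\infty$, and $\lambda_n:=\lambda^{(j)}_n$ on $F_j$ (and $1$ elsewhere), every greedy ordering of $x$ with respect to $\lambdaen$ exhausts $F_1$, then $F_2$, and so on, with each block ordered greedily for $\lambda^{(j)}$. Here we use that $\abs{e_n^*(x)}/\abs{\lambda_n}$ on $F_j$ lies in the rescaled range and the separation exceeds the factor $a$. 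Rescaling by $a^{p_j}$ does interact with the normalization, so the order matters: first fix the scale of each block, then choose it via the dyadic windows, and only afterwards normalize. Because the scale factor $a^{p_j}$ is tied to the norm, the block must be picked among witnesses with $\norm{x_j}$ tiny relative to the previous coefficient scale, with ratio $\geq 4^j$. This is a matter of choosing $x_j$ after the scale is fixed, which is possible since the ratio is scale invariant.

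\textbf{Step 3 (conclusion).} For the natural greedy ordering of $x$ with respect to $\lambdaen$, the greedy sums at times $\abs{F_1}+\dots+\abs{F_{j-1}}+n$ with $n\leq m_j$ equal $\sum_{i<j}x_i$ plus the $n$-th greedy sum of $x_j$ with respect to $\lambda^{(j)}$. Taking differences of two consecutive such sums gives
\begin{equation*}
\norm{\tilde{\mathcal{G}}^{\vee}_M(x)}\geq \tfrac12\norm{\tilde{\mathcal{G}}^{(j),\vee}_{m_j}(x_j)}\geq 2^{j-1}
\end{equation*}
for suitable $M$. Letting $j\to\infty$ contradicts Thm.~\ref{Charac:UQG}~\ref{Charac:UQG:v} for $\lambdaen$.

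The main obstacle is Step 2. It has two parts: proving that the blow-up survives restriction to tails (the compactness/finite-dimensional argument), and arranging the magnitudes so that the glued scaling really makes the greedy ordering respect the blocks.
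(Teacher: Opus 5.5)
Your skeleton is the paper's: contradiction, persistence of the blow-up on tails, disjointly supported witnesses, and one glued scaling. The argument can be made to work, but you take a longer route at the end.

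The paper stops as soon as the scalings are glued. With $\lambda_n=\lambda^{(j)}_n$ on $F_j$, each witness $x_j$ is still a witness for the single basis $\lambdaen$, because its greedy orderings only see the ratios $\abs{e_n^*(x_j)}/\abs{\lambda_n}$ on $F_j$. So the constant of $\lambdaen$ in Thm.~\ref{Charac:UQG}~\ref{Charac:UQG:vi} would have to be unbounded in $j$. But $\lambdaen$ is uniformly quasi-greedy by hypothesis, so Thm.~\ref{Charac:UQG} gives a finite such constant, which is the contradiction. No normalization, no magnitude windows and no single vector $x$ are needed. The second half of your Step 2 and your Step 3 essentially re-prove the implication \ref{Charac:UQG:v}$\Rightarrow$\ref{Charac:UQG:vi} of Thm.~\ref{Charac:UQG} in this special case.

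For the tail step, the paper peels off one coordinate at a time, using $\norm{e_1^*(x)e_1}\le K_b\norm{x}$, and inducts. Your one-shot version with $I-P_N$ is equivalent. Your Step 1, reducing to finitely supported witnesses by truncation, makes explicit something the paper uses silently.

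Two points in your version need repair.

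\emph{The compactness step.} Compactness of the set of scalings on $\{1,\dots,N\}$ does not by itself bound the constants. Greedy orderings, and hence the constants, jump as $\lambda$ varies, so there is no semicontinuity to invoke. No compactness is needed, though. The head part of any greedy sum of $x$ is dominated by $\sum_{n\le N}\abs{e_n^*(x)e_n}$, whose norm is at most $N\sup_n\norm{e_n^*}\sup_n\norm{e_n}\norm{x}$, independently of $\lambda$.

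\emph{The normalization in Step 2.} As written, the bookkeeping is inconsistent. You cannot both normalize $\norm{x_j}=2^{-j}$ and force all coefficients of $x_j$ below $a^{-1}\min_{F_{j-1}}\abs{e_n^*(x_{j-1})}$. If instead $\norm{x_j}=\delta_j$ is tiny, a ratio of $4^j$ only gives a maximal function of norm at least $4^j\delta_j$, which need not blow up. You must pick the witness after $\delta_j\le 2^{-j}$ has been fixed, with ratio at least $j/\delta_j$, exactly as the constants $C_k$ are chosen in the proof of Thm.~\ref{UniformGO:Charac}. This is possible because the tail supremum is infinite. The separation then follows from $\abs{e_n^*(x_j)}\le 2K_b(\inf_n\norm{e_n})^{-1}\delta_j$.

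With these fixes your proof is correct, just more laborious than necessary.
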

\begin{proof}
  We prove by contradiction. Assume that the supremum is infinite. We claim then that the supremum of the uniformly quasi-greedy constants over all such $(\lambda_n e_n)_{n\geq 2}$ is also infinite. Suppose not. Then there exists $M\geq 1$ such that for all $x\in E$ with finite support and $e^*_1(x)=0$, for all scalings $\lambdan$ with $1\leq\abs{\lambda_n}\leq a$ and for all greedy orderings $\pi_{\lambda}$ of $x$ with respect to $(\lambda_n e_n)_{n\geq 2}$, we have
  \begin{equation*}
    \norm{\tilde{G}_{\pi_{\lambda},\abs{\supp(x)}}^{\vee}(x)} \leq M\norm{x}.
  \end{equation*}
  For any $x\in E$ with finite support and $e^*_1(x)\neq 0$, a scaling $\lambdan$ with $1\leq\abs{\lambda_n}\leq a$, and a greedy ordering $\pi_{\lambda}$ of $x$ with respect to $\lambdaen$, we have then
  \begin{equation*}
    \tilde{G}_{\pi_{\lambda},\abs{\supp(x)}}^{\vee}(x) 
    \leq \abs{e^*_1(x)e_1} + \tilde{G}_{\pi_{\lambda},\abs{\supp(x)}-1}^{\vee}(x-e^*_1(x)e_1).
  \end{equation*} 
  Taking norm on both sides yields
  \begin{align*}
    \norm{\tilde{G}_{\pi_{\lambda},\abs{\supp(x)}}^{\vee}(x)} 
    &\leq \norm{e^*_1(x)e_1} + \norm{\tilde{G}_{\pi_{\lambda},\abs{\supp(x)}-1}^{\vee}(x-e^*_1(x)e_1)}\\
    &\leq \norm{e^*_1(x)e_1} + M\norm{x-e^*_1(x)e_1}\\
    &\leq (1+M)\norm{e^*_1(x)e_1} + M\norm{x}\\
    &\leq ((1+M)K_b+M)\norm{x},
  \end{align*}
  where $K_b$ denotes the basis constant of $\en$. This contradicts the assumption and therefore proves the claim. Proceeding inductively, we obtain that the supremum of the uniformly quasi-greedy constants over all $(\lambda_n e_n)_{n\geq N}$ with $1\leq\abs{\lambda_n}\leq a$ is infinite for all $N\in\N$.\par
  By assumption, there exists an $x_1\in E$ with finite support, a scaling with $1\leq\abs{\lambda^1_n}\leq a$ and a greedy ordering $\pi_1$ of $x$ with respect to $(\lambda_n^1 e_n)_{n\in\N}$ such that 
  \begin{equation*}
    \norm{\tilde{G}^{\vee}_{\pi_1,\supp(x_1)}(x_1)} \geq \norm{x_1}.
  \end{equation*}  
  Let $N_1 \coloneqq \max_{n\in \supp(x)} n$. By the above claim, we can find an $x_2\in E$ with finite support in $\N\setminus\{1,\ldots N_1\}$, a scaling with $1\leq\abs{\lambda^2_n}\leq a$ and a greedy ordering $\pi_2$ of $x_2$ such that 
  \begin{equation*}
    \norm{\tilde{G}^{\vee}_{\pi_2,\supp(x_2)}(x_2)} \geq 2\norm{x_2}.
  \end{equation*}
  Proceeding iteratively, we find a sequence of elements $(x_k)_{k\in\N}$ with finite and disjoint support such that for every $k\in\N$ there exists a scaling $(\lambda_n^k)_{n\in\N}$ and a greedy ordering $\pi_k$ of $x_k$ with respect to $(\lambda_n^k e_n)_{n\in\N}$ that satisfies
  \begin{equation*}
    \norm{\tilde{G}^{\vee}_{\pi_k,\supp(x_k)}(x_k)} \geq k\norm{x_k}.
  \end{equation*}
  Since $(x_k)_{k\in\N}$ have disjoint support, we can define a scaling $\lambdan$ that coincides with $(\lambda_n^k)_{n\in\N}$ on $\supp(x_k)$. The resulting scaled basis $\lambdaen$ then fails Thm.~\ref{Charac:UQG}~\ref{Charac:UQG:vi}, a contradiction.
\end{proof}
\begin{question}
    If $X$ is not an AM-space, are conditional uniformly quasi-greedy bases stable under scalings with bounded quotient? 
\end{question}
\subsection{1-uniformly quasi-greedy bases}
If $X$ is an AM-space, uniformly quasi-greediness is equivalent to quasi-greediness, and it has been shown that 1-quasi-greediness is equivalent to 1-suppression-unconditionality~\cite{1QG}. Inspired by this result, one may wonder whether $K_{uqg}=1$ implies that $\en$ is 1-absolute. Let us first recall that the norm on $X$ is said to be strictly monotone if
\begin{equation*}
    \forall x,y\in X: \; \abs{x}<\abs{y}\implies \norm{x}<\norm{y}.
\end{equation*}
The strict monotonicity of the norm allows norm equality to reveal order relations, which makes 1-uniformly quasi-greediness a fairly strong condition.
\begin{prop}\label{1-UQG:monotone norm}
    Assume that the norm on $X$ is strictly monotone. TFAE
    \begin{enumerate}
        \item $\en$ is 1-bibasic;\label{1-UQG:monotone norm:i}
        \item $\en$ is 1-uniformly quasi-greedy;\label{1-UQG:monotone norm:ii}
        \item $\en$ are disjoint.\label{1-UQG:monotone norm:iii}
    \end{enumerate}
\end{prop}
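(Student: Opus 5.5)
The plan is to prove \ref{1-UQG:monotone norm:iii}$\Rightarrow$\ref{1-UQG:monotone norm:i} and \ref{1-UQG:monotone norm:iii}$\Rightarrow$\ref{1-UQG:monotone norm:ii} directly from disjointness. Then I would prove \ref{1-UQG:monotone norm:i}$\Rightarrow$\ref{1-UQG:monotone norm:iii} and \ref{1-UQG:monotone norm:ii}$\Rightarrow$\ref{1-UQG:monotone norm:iii} by testing the maximal inequality on two-term vectors. Strict monotonicity will convert the resulting norm equalities into order inequalities.

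\textbf{Disjointness gives constant $1$.} If the $e_n$ are pairwise disjoint, then for any finite set $S\subset\N$ we have $\abs{\sum_{n\in S}a_ne_n}=\sum_{n\in S}\abs{a_n}\abs{e_n}$. Take $x\in E$ and $S\subset\{1,\ldots,M\}$. Then $\abs{\sum_{n\in S}e_n^*(x)e_n}\leq \abs{P_Mx}$, and for $M'\geq M$ also $\abs{P_Mx}\leq\abs{P_{M'}x}$. Letting $M'\to\infty$ and using that the positive cone is norm closed gives $\abs{\sum_{n\in S}e_n^*(x)e_n}\leq\abs{x}$. Every partial sum and every greedy sum is of this form, so $\mathcal{G}^{\vee}_m(x)\leq\abs{x}$, and likewise the supremum of the first $m$ partial sums is $\leq\abs{x}$. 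Taking norms gives constant $1$ in both \eqref{max ineq:UQG} and the bibasic maximal inequality.

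\textbf{Key lemma.} Suppose $u\in X$ satisfies $\norm{\abs{u}\vee\abs{y}}\leq\norm{y}$. Put $v=\abs{u}\vee\abs{y}\geq\abs{y}$. If $v\neq\abs{y}$, then $\abs{y}<v$ and strict monotonicity gives $\norm{y}<\norm{v}$, which is a contradiction. Hence $v=\abs{y}$, that is, $\abs{u}\leq\abs{y}$.

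\textbf{From constant $1$ to disjointness.}

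For \ref{1-UQG:monotone norm:i}$\Rightarrow$\ref{1-UQG:monotone norm:iii}, fix $j<k$ and $t\in\R$. Apply the $1$-bibasic inequality to $y=e_j+te_k$, whose first nonzero partial sum is $e_j$. The lemma then gives $\abs{e_j}\leq\abs{e_j+te_k}$ for every $t\in\R$.

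For \ref{1-UQG:monotone norm:ii}$\Rightarrow$\ref{1-UQG:monotone norm:iii}, take $y=e_j+te_k$ with $\abs{t}\leq 1$. There is a greedy ordering of $y$ that starts with $j$, and the uniformly quasi-greedy inequality may be used with any greedy ordering, as remarked after Thm.~\ref{Charac:UQG}. This gives $\abs{e_j}\leq\abs{e_j+te_k}$ for $\abs{t}\leq1$. Symmetrically, $\abs{e_k}\leq\abs{e_k+se_j}$ for $\abs{s}\leq1$.

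It remains to deduce $\abs{e_j}\wedge\abs{e_k}=0$. I would pass to a functional representation of the principal ideal generated by $\abs{e_j}+\abs{e_k}$, using Yosida/Kakutani: an Archimedean principal ideal embeds as a vector sublattice of extended continuous functions on a compact space $K$, dense on a dense open set. Lattice inequalities then hold pointwise on a dense set. Take such a point $p$ and write $\alpha=e_j(p)$, $\beta=e_k(p)$.

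In case \ref{1-UQG:monotone norm:i}, if $\beta\neq0$, choose $t=-\alpha/\beta$. Then $\abs{\alpha}\leq 0$, so $\alpha=0$ and $\alpha\beta=0$.

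In case \ref{1-UQG:monotone norm:ii}, if $\abs{\beta}\geq\abs{\alpha}$ and $\beta\neq0$, use $t=-\alpha/\beta$, which satisfies $\abs{t}\leq1$. Otherwise $\abs{\alpha}>\abs{\beta}$, and $s=-\beta/\alpha$ forces $\beta=0$. In both cases $\min(\abs{\alpha},\abs{\beta})=0$ on a dense set, hence everywhere by continuity, so $e_j$ and $e_k$ are disjoint.

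\textbf{Main obstacle.} I expect the delicate step to be this last one: justifying that the inequality family, with $t$ chosen pointwise, really yields disjointness. An alternative that avoids representation theory is to work purely in the lattice: set $w=\abs{e_j}\wedge\abs{e_k}$ and, with finitely many values of $t$, try to show $w\leq\epsilon(\abs{e_j}+\abs{e_k})$ for all $\epsilon>0$, then invoke the Archimedean property. I would first try the Yosida-representation route, since it makes the pointwise choice of $t$ legitimate.
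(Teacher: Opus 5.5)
Your proposal is correct and is essentially the paper's proof. Both test the constant-one maximal inequality on $e_j+te_k$, use strict monotonicity to upgrade the norm equality to $\abs{e_j}\leq\abs{e_j+te_k}$, and then choose $t$ pointwise in a $C(K)$ representation of the principal ideal. Your ``main obstacle'' is not actually an obstacle, because each of these lattice inequalities holds at every point for every admissible $t$.

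The differences are minor. The paper takes $t=-\theta\alpha/\beta$ at a point where $\alpha\beta\neq 0$, with $\theta\in(0,1]$ small enough that $\abs{t}\leq 1$; this gives $\abs{\alpha+t\beta}<\abs{\alpha}$ directly and avoids your case split and symmetric inequality. In that symmetric inequality you only ever use $\abs{s}<1$, so the natural greedy ordering suffices. Finally, your explicit argument for (iii)$\Rightarrow$(i),(ii) supplies the direction the paper leaves unstated.
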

\begin{pf}
    We prove that both~\ref{1-UQG:monotone norm:i} and~\ref{1-UQG:monotone norm:ii} imply \ref{1-UQG:monotone norm:iii}. Assume that~\ref{1-UQG:monotone norm:i} or~\ref{1-UQG:monotone norm:ii} holds. For $i,j\in \N, i<j$ and $t\in\interval{-1}{1}$, we have
    \begin{equation*}
        \norm{\abs{e_i}\vee\abs{e_i+t e_j}} = \norm{e_i+t e_j},        
    \end{equation*}
    which, by the strict monotonicity of the norm, implies that $\abs{e_i}\vee\abs{e_i+t e_j} \ngtr \abs{e_i+t e_j}$ for all $t\in\interval{-1}{1}$. Suppose that $\abs{e_i}\wedge\abs{e_j}>0$. In view of the $C(K)$ representation theorem, there exists $\xi\in K$ such that $e_i(\xi)\neq 0\neq e_j(\xi)$. Choosing $t=\theta \frac{-e_i(\xi)}{e_j(\xi)}$ for some $\theta\in \interval[open left]{0}{1}$ such that $t\in \interval{-1}{1}\setminus{\{0\}}$, we obtain that 
    \begin{equation*}
        \abs{e_i+t e_j}(\xi)<\abs{e_i}(\xi),
    \end{equation*}
    which implies that $\abs{e_i}\vee\abs{e_i+t e_j} > \abs{e_i+t e_j}$, a contradiction. 
\end{pf}
\begin{question}
    If $X$ is not an AM-space and the norm on $X$ is not strictly monotone, does 1-uniformly-quasi-greediness still imply 1-absoluteness? 
\end{question}
\subsection{Absolutely quasi-greedy bases}
Let $\en$ be a uniformly quasi-greedy basis of $E\subset X$. From Thm.~\ref{Charac:UQG}~\ref{Charac:UQG:iv}, we know that the greedy sums of any element $x\in E$ associated to any greedy ordering $\pi$ is uniformly order bounded. However, as shown in~\cite[Ex. 3.10]{UQGBases}, the order bound is, in general, not uniform over all greedy orderings of $x$. This leads us to the following definition of a stronger notion of quasi-greediness.
\begin{definition}
    A basis $\en$ of a Banach subspace $E$ of a Banach lattice $X$ is said to be \textbf{absolutely quasi-greedy} if, for all $x\in E$, there exists $u\in X$ such that
    \begin{equation*}
        \forall \text{ greedy ordering } \pi \text{ of } x, \; \forall m\in \N:\;
        G_{\pi, m}^{\vee}(x)\leq u.
    \end{equation*}
\end{definition}
For uniformly quasi-greedy bases, the following characterization can be derived for absolutely quasi-greediness, which was first proven in~\cite[Prop.~4.6]{BibasicUQGThesis}.
\begin{prop}\label{UniformGO:Equiv:1}
  Let $\en$ be a uniformly quasi-greedy basis. TFAE
  \begin{enumerate}
    \item $\en$ is absolutely quasi-greedy;\label{UniformGO:Equiv:1:i}
    \item For all $x\in E$, there exists $u\in X$ such that
    \begin{equation*}
      \forall\mu>0:\; \sum_{\abs{e^*_n(x)}=\mu} \abs{e^*_n(x)e_n} \leq u.
    \end{equation*}\label{UniformGO:Equiv:1:ii}
  \end{enumerate}
\end{prop}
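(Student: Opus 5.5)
For (i)$\Rightarrow$(ii), fix $x\in E$ and let $u$ be the uniform bound over all greedy orderings from the definition of absolute quasi-greediness. For a given $\mu>0$ the level set $\Set{n}{\abs{e^*_n(x)}=\mu}$ is finite, since $e^*_n(x)\to 0$. Our goal is to dominate $\sum_{\abs{e^*_n(x)}=\mu}\abs{e^*_n(x)e_n}$ pointwise by a fixed multiple of $u$. A greedy ordering may list the elements of this level set in any order, and it must be preceded by exactly the set $\Set{n}{\abs{e^*_n(x)}>\mu}$. So every partial sum $\sum_{n\in S}e^*_n(x)e_n$ with $S$ a subset of the level set is a difference $G_{\pi,m}(x)-G_{\pi,l}(x)$ for a suitable greedy ordering $\pi$: take $l$ to be the size of the strictly larger part and $m=l+\abs{S}$. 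Hence $\abs{\sum_{n\in S}e^*_n(x)e_n}\le 2u$ for every such $S$.

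To pass from these subset sums to the sum of absolute values, I would use the $C(K)$ representation, or equivalently a lattice functional-calculus argument for finitely many elements. For finitely many vectors $y_1,\dots,y_N$ one has the identity $\sum_j\abs{y_j}=\bigvee_{\epsilon}\abs{\sum_j\epsilon_j y_j}$ over sign choices $\epsilon_j=\pm1$. Also $\sum_j\epsilon_jy_j=\sum_{\epsilon_j=1}y_j-\sum_{\epsilon_j=-1}y_j$, which is a difference of two subset sums. Together these give $\sum_{\abs{e^*_n(x)}=\mu}\abs{e^*_n(x)e_n}\le 4u$, with a bound independent of $\mu$, which proves (ii).

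For (ii)$\Rightarrow$(i), fix $x$, let $v$ be the bound from (ii), and let $\sigma$ be the natural greedy ordering. By Thm.~\ref{Charac:UQG}~\ref{Charac:UQG:iv} there is $w\in X$ with $\abs{\mathcal{G}_m(x)}\le w$ for all $m$. Now take any greedy ordering $\pi$ and any $m$, and let $\mu=\abs{e^*_{\pi(m)}(x)}$ (if $\mu=0$, the sum $G_{\pi,m}(x)$ is a strictly greedy sum, equal to $x$, which is itself dominated by $w$ via the limit). Then
\[
G_{\pi,m}(x)=\sum_{\abs{e^*_n(x)}>\mu}e^*_n(x)e_n+\sum_{n\in S}e^*_n(x)e_n,
\]
where $S$ is a subset of the level set at $\mu$. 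The first term is a natural greedy sum $\mathcal{G}_l(x)$, so it is bounded by $w$. The second is bounded in modulus by $\sum_{\abs{e^*_n(x)}=\mu}\abs{e^*_n(x)e_n}\le v$. Therefore $\abs{G_{\pi,m}(x)}\le w+v$ for all $\pi$ and $m$, and taking suprema gives $G^{\vee}_{\pi,m}(x)\le w+v$, which is (i).

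The main obstacle is the lattice identity turning the signed subset-sum bounds into a bound on $\sum\abs{y_j}$. I would verify it pointwise in a $C(K)$ representation of the sublattice generated by the finitely many $y_j$, via Kakutani/Yosida for finitely generated sublattices. There, at each point, choosing $\epsilon_j=\operatorname{sign}y_j(\xi)$ attains $\sum_j\abs{y_j(\xi)}$. The remaining steps are bookkeeping about which index sets greedy orderings can realize.
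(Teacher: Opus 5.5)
Your proof is correct and follows essentially the same route as the paper. For (i)$\Rightarrow$(ii) you bound every subset sum of a level set by $2u$, since each is a difference of greedy sums, and then pass pointwise in a $C(K)$ representation to $\sum\abs{e^*_n(x)e_n}\le 4u$; for (ii)$\Rightarrow$(i) you split $G_{\pi,m}(x)$ into the strictly greedy part above $\mu$ plus a piece of the level set. Your sign identity $\sum_j\abs{y_j}=\bigvee_\epsilon\abs{\sum_j\epsilon_jy_j}$ is only a repackaging of the paper's inequality $\sum_k\abs{x_k}\le 2\sup_B\abs{\sum_{k\in B}x_k}$, and your explicit handling of the case $\mu=0$ covers a point the paper passes over.
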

\begin{proof}
  For~\ref{UniformGO:Equiv:1:i}$\Rightarrow$\ref{UniformGO:Equiv:1:ii}, let $x\in X$ and $u\in X$ such that $u$ dominates the greedy sums of $x$ associated to any greedy ordering. For $\mu>0$ such that $\Set{n\in\N}{\abs{e^*_n(x)}=\mu}\neq \emptyset$, $\sup_{B\subset \Set{n\in\N}{\abs{e^*_n(x)}=\mu}}\abs{\sum_{n\in B}e^*_n(x)e_n}\leq 2u$ since all the sums in the supremum are differences of greedy sums. In view of the $C(K)$ representation theorem, the following relation holds for all $x_1, \ldots, x_m\in\R$ and hence also for arbitrary elements in $X$ 
  \begin{equation*}
    \sum_{k=1}^m \abs{x_k} \leq 2 \sup_{B\subset\{1,\ldots, m\}}\abs{\sum_{k\in B}x_k},
  \end{equation*}
  which, in combination with the above argument, gives that $\sum_{n\in B}\abs{e^*_n(x)e_n}\leq 4u$ and this bound is uniform over $\mu$.\par
  For the converse, let $x\in X$ and $\sigma$ be a fixed greedy ordering of $x$. Any greedy sum $G_{\pi,m}(x)$ for a greedy ordering $\pi$ and $m\in\N$ can be written as
  \begin{equation*}
    G_{\pi,m}(x) = \sum_{\abs{e^*_n(x)}>\mu}e^*_n(x)e_n + \sum_{n\in B}e^*_n(x)e_n
  \end{equation*}  
  for some $\mu>0$ and $B\subset \Set{n\in\N}{\abs{e^*_n(x)}=\mu}$. Since $\sum_{\abs{e^*_n(x)}>\mu}e^*_n(x)e_n$ is a greedy sum of $x$ associated to $\sigma$ and $\en$ is uniformly quasi-greedy, there exists $v\in X$ independent of $\mu$ such that $\abs{\sum_{\abs{e^*_n(x)}>\mu}e^*_n(x)e_n}\leq v$. Hence, using $(ii)$ we obtain
  \begin{equation*}
    \abs{G_{\pi,m}} \leq \abs{\sum_{\abs{e^*_n(x)}>\mu}e^*_n(x)e_n} + \abs{\sum_{B}e^*_n(x)e_n} \leq v + \sum_{B}\abs{e^*_n(x)e_n} \leq v+u
  \end{equation*}
  for some $u\in X$ independent of $\mu$ and $B$ and thus independent of $\pi$ and $m$.
\end{proof}
Using the above characterization, one can prove that the (normalized) Haar system $\chin$ in $L^p(\interval{0}{1})$, $1<p<\infty$, is absolutely quasi-greedy, which was first shown in~\cite[Prop. 4.7]{BibasicUQGThesis}. Let us first recall an important lemma~\cite[Ch. 3, Lemma 1]{OrthSeries} for the Haar system.
\begin{lemma}\label{Haar:Property}
  For any partial sum of the form
  \begin{equation*}
    \sum_{n=M}^{N} a_n\chi_n, \quad 1<M<N
  \end{equation*}
  there exists a permutation $(\sigma(n))_{n=M}^N$ of the numbers $M,M+1,\ldots,N$ such that 
  \begin{equation*}
    \bigvee_{M\leq p\leq q\leq N} \abs{\sum_{n=p}^{q} a_{\sigma(n)}\chi_{\sigma(n)}}
    \geq \frac{1}{4}\sum_{M}^{N} \abs{a_n\chi_n}.
  \end{equation*}
\end{lemma}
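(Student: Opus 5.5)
We plan to build the permutation explicitly and then check the inequality pointwise at every $t\in\interval{0}{1}$. Since everything lives in $L^p$, it is enough to check the inequality almost everywhere, so we may discard the finitely many dyadic endpoints and midpoints involved. Recall that each $\chi_n$, $n\geq 2$, is supported on a dyadic interval $\Delta_n$ with midpoint $m_n$. It equals $+\abs{\Delta_n}^{-1/2}$ to the left of $m_n$ and $-\abs{\Delta_n}^{-1/2}$ to the right. Distinct Haar functions have distinct supporting intervals, and distinct dyadic intervals have distinct midpoints. Hence the numbers $m_n$, $M\leq n\leq N$, are pairwise distinct and can be used as sorting keys.

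First we split the indices by the sign of the coefficient: $P\coloneqq\Set{n\in\{M,\ldots,N\}}{a_n>0}$ and $Q\coloneqq\Set{n\in\{M,\ldots,N\}}{a_n<0}$. Indices with $a_n=0$ can be placed anywhere, say at the end. The permutation $\sigma$ lists first the elements of $P$ in increasing order of $m_n$, then the elements of $Q$ in increasing order of $m_n$, then the rest. Since $P$ and $Q$ occupy consecutive blocks, the supremum over intervals $M\leq p\leq q\leq N$ dominates the supremum over subintervals lying inside the $P$-block, and likewise for the $Q$-block.

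The key pointwise observation is the following. Fix $t$ (not a midpoint or endpoint) and $n\in P$ with $t\in\Delta_n$. Then $a_n\chi_n(t)>0$ if and only if $t<m_n$. Terms with $t\notin\Delta_n$ vanish at $t$. In the $P$-block sorted by midpoint, let $p_0$ be the last position with $m_{\sigma(p)}<t$. The partial sum of the $P$-block up to $p_0$, evaluated at $t$, consists only of nonpositive terms. The remaining sum from $p_0+1$ to the end of the block consists only of nonnegative terms. Hence
\begin{equation*}
  \bigvee_{p\leq q \text{ in } P\text{-block}}\abs{\sum_{n=p}^q a_{\sigma(n)}\chi_{\sigma(n)}(t)}\;\geq\;\max\Bigl(\sum_{n\in P,\,m_n<t}\abs{a_n\chi_n(t)},\ \sum_{n\in P,\,m_n>t}\abs{a_n\chi_n(t)}\Bigr)\;\geq\;\frac12\sum_{n\in P}\abs{a_n\chi_n(t)}.
\end{equation*}
The same argument with signs reversed applies to the $Q$-block.

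Combining the two blocks, the left-hand side of Lemma~\ref{Haar:Property} at $t$ is at least
\begin{equation*}
  \frac12\max\Bigl(\sum_{P}\abs{a_n\chi_n(t)},\ \sum_{Q}\abs{a_n\chi_n(t)}\Bigr)\;\geq\;\frac14\sum_{n=M}^N\abs{a_n\chi_n(t)}.
\end{equation*}
This holds for a.e.\ $t$, i.e.\ as an inequality in the lattice $L^p$. The main point requiring care is the choice of sorting key: we need a single ordering, independent of $t$, that separates the positive from the negative active terms at every $t$ simultaneously. The midpoint ordering does this, because at each $t$ the sign of an active term within a fixed-sign block depends only on whether $t<m_n$. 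The remaining bookkeeping, namely zero coefficients and the measure-zero set of endpoints, is routine.
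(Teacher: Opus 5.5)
The paper gives no proof of this lemma; it quotes it from Kashin--Saakyan~\cite[Ch.~3, Lemma~1]{OrthSeries}. Your proposal therefore replaces a citation with a self-contained proof, and that proof is correct and complete.

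You handle the one delicate point correctly. A single ordering, chosen independently of $t$, has to separate the positive from the negative active terms at almost every $t$ at once. Sorting each sign block by the midpoints of the supports does exactly this. The two halvings (first into sign blocks, then splitting a block at $t$) give precisely the constant $\frac14=\frac12\cdot\frac12$. Reducing the lattice inequality in $L^p$ to a pointwise check off a finite set is also legitimate, since a finite supremum in $L^p$ is the a.e.\ pointwise maximum.

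Beyond being self-contained, your argument gains two small things:
\begin{enumerate}
\item It never uses that the indices form a consecutive range $M,\ldots,N$. So it applies verbatim to any finite $B\subset\N\setminus\{1\}$, and the zero-padding in the Remark after the lemma becomes unnecessary.
\item It uses only the sign pattern of $\chi_n$ on the two halves of its support. So it does not matter whether the Haar functions are normalized in $L^2$ (values $\pm\abs{\Delta_n}^{-1/2}$, as you wrote) or in $L^p$, as in the paper's setting.
\end{enumerate}
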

\begin{remark}
  Lemma~\ref{Haar:Property} also holds for $\sum_{n\in B} a_n\chi_n$ for any finite subset $B\subset\N\setminus\{1\}$ ordered by the natural order on $\N$. To see this, apply Lemma~\ref{Haar:Property} to $M=\min_{n\in B} n$, $N=\max_{n\in B} n$ and set $a_n=0$ for $n\notin B, M\leq n\leq N$.
\end{remark}
\begin{prop}\label{UniformGO:Haar}
  The Haar system $\chin$ is absolutely quasi-greedy in $L^p(\interval{0}{1})$, $1<p<\infty$. 
\end{prop}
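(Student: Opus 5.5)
We will verify condition~\ref{UniformGO:Equiv:1:ii} of Prop.~\ref{UniformGO:Equiv:1}. The Haar system is unconditional and uniformly quasi-greedy in $L^p(\interval{0}{1})$, $1<p<\infty$, by the Example following Prop.~\ref{Scaling:Uncond:1}, so Prop.~\ref{UniformGO:Equiv:1} applies. Fix $x\in L^p$ and set $a_n\coloneqq e^*_n(x)$. For $\mu>0$ let $B_\mu\coloneqq\Set{n\in\N}{\abs{a_n}=\mu}$; these sets are finite and pairwise disjoint for distinct $\mu$. The first index $n=1$ (the constant function) lies in at most one $B_\mu$ and contributes at most $\abs{a_1\chi_1}$, which we simply add to the final bound. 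So it suffices to find $u\in L^p$ with $\sum_{n\in B_\mu\setminus\{1\}}\abs{a_n\chi_n}\leq u$ for all $\mu$.

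The natural candidate is the pointwise supremum over $\mu$ of these sums:
\begin{equation*}
  u\coloneqq \sup_{\mu>0}\sum_{n\in B_\mu\setminus\{1\}}\abs{a_n\chi_n}.
\end{equation*}
We need to show $u\in L^p$. Since $L^p$ has the Fatou property, it is enough to bound $\norm{\bigvee_{\mu\in F}\sum_{n\in B_\mu\setminus\{1\}}\abs{a_n\chi_n}}$ uniformly over finite sets $F$ of levels $\mu$.

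The key step is Lemma~\ref{Haar:Property}, in the form of the subsequent Remark, applied to each $B_\mu\setminus\{1\}$. It gives a permutation $\sigma_\mu$ of $B_\mu\setminus\{1\}$ such that
\begin{equation*}
  \sum_{n\in B_\mu\setminus\{1\}}\abs{a_n\chi_n}\leq 4\bigvee_{p\leq q}\abs{\sum_{i=p}^q a_{\sigma_\mu(i)}\chi_{\sigma_\mu(i)}}.
\end{equation*}
Each consecutive block on the right is a difference of two greedy sums of $x$, taken with respect to a greedy ordering $\pi$ of $x$ that enumerates $B_\mu$ in the order $\sigma_\mu$. We choose a single greedy ordering $\pi$ that uses the order $\sigma_\mu$ on each level set $B_\mu$ simultaneously; this is legitimate because the level sets are disjoint and are traversed in decreasing order of $\mu$. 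Then every block is bounded by $2\,G^{\vee}_{\pi,m}(x)$ for suitable $m$, and therefore
\begin{equation*}
  \bigvee_{\mu\in F}\sum_{n\in B_\mu\setminus\{1\}}\abs{a_n\chi_n}\leq 8\,G^{\vee}_{\pi,m}(x)
\end{equation*}
for $m$ large. By Thm.~\ref{Charac:UQG}~\ref{Charac:UQG:vi}, applied with respect to this greedy ordering $\pi$ (the paper notes that the characterization is independent of the choice of ordering), we get $\norm{G^{\vee}_{\pi,m}(x)}\leq K_{uqg}\norm{x}$.

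The main obstacle is precisely this ordering independence: the constant $K_{uqg}$ must bound $G^{\vee}_{\pi,m}$ for every greedy ordering $\pi$, not only the natural one. If that uniformity is not available, we would instead argue directly through Tao's maximal-function bound. On the Haar system, the sums $G_{\pi,m}(x)$ coincide with strictly greedy sums except for a partial level set, and the resulting level-set sums can be dominated via the Hardy--Littlewood maximal function and Fefferman--Stein type estimates, which also yields the required uniform bound.
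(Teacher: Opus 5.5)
Your proposal is correct and is essentially the paper's proof: you apply Lemma~\ref{Haar:Property} (via the Remark) to each level set, splice the resulting permutations into a single greedy ordering $\pi$ so that every block is a difference of two greedy sums, and thereby verify condition~\ref{UniformGO:Equiv:1:ii} of Prop.~\ref{UniformGO:Equiv:1}. The only deviation is that the paper takes the order bound $\abs{G_{\pi,N}(x)}\leq u$ directly from Thm.~\ref{Charac:UQG}~\ref{Charac:UQG:iv} for this $\pi$, whereas you use the norm bound together with the Fatou property of $L^p$; your flagged ``obstacle'' only needs a bound for this single $x$ and $\pi$, which is the same ordering-independence the paper invokes, so the Tao fallback is not needed.
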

\begin{proof}
  We establish $(ii)$ in Prop.~\ref{UniformGO:Equiv:1}. Let $x\in E$ and WLOG assume that $\chi_1^*(x)=0$. Let $(\mu_l)_{l\in\N}$ be the enumeration of the numbers $(\abs{\chi_n^*(x)})_{n\in\N}$ in descending order with identical numbers counted once. For all $\mu_l$, let $\sigma_l$ be the permutation given by applying Lemma~\ref{Haar:Property} to $\sum_{\abs{e_n^*(x)}=\mu_l} \chi^*_n(x)\chi_n$ and define a greedy ordering $\pi$ of $x$ by
  \begin{equation*}
    \forall \mu_l: \forall i,j\in \Set{n\in\N}{\abs{\chi_n^*(x)}=\mu_l}:\;
    \sigma_l^{-1}(i)<\sigma_l^{-1}(j)\implies\pi^{-1}(i)<\pi^{-1}(j).
  \end{equation*}
  Let $\mu_l$ be arbitrary and $\{k_1, k_2,\ldots, k_m\}$ be the enumeration of $\Set{n\in\N}{\abs{\chi_n^*(x)}=\mu_l}$ in the natural order given by $\N$. For all $1\leq p\leq q\leq m$, there exists $N_1,N_2\in\N, N_1<N_2$ such that
  \begin{equation*}
    \sum_{i=p}^{q} \chi^*_{\sigma_l(k_i)}(x)\chi_{\sigma_l(k_i)} 
    = G_{\pi,N_2}(x)-G_{\pi,N_1}(x)
  \end{equation*}
  by our definition of $\pi$. Using that $\chin$ is uniformly quasi-greedy, there exists a $u\in X$ such that $\abs{G_{\pi,N}(x)}\leq u$ for all $N\in\N$. Hence
  \begin{equation*}
    \abs{\sum_{i=p}^{q} \chi^*_{\sigma_l(k_i)}(x)\chi_{\sigma_l(k_i)}}
    \leq \abs{G_{\pi,N_2}(x)} + \abs{G_{\pi,N_1}(x)}
    \leq 2u
  \end{equation*}
  and this bound is independent of $p,q,\mu_l$. We conclude by taking supremum over $1\leq p\leq q\leq m$ and using Lemma~\ref{Haar:Property}
  \begin{equation*}
    \frac{1}{4}\sum_{i=1}^{m} \abs{\chi^*_{k_i}(x)\chi_{k_i}}
    \leq \bigvee_{1\leq p\leq q\leq m} \abs{\sum_{i=p}^{q} \chi^*_{\sigma_l(k_i)}(x)\chi_{\sigma_l(k_i)}}
    \leq 2u.\qedhere
  \end{equation*}
\end{proof}
Guided by the characterization of uniformly quasi-greedy bases in Thm.~\ref{Charac:UQG}, we now present a maximal-inequality type characterization of absolutely quasi-greediness. To that end, let us denote the set of greedy orderings of an $x\in E$ as $\Gamma(x)$ and note that for a given $m\in \N$, the set of $m$-th greedy sums of $x$ is finite, which implies that the set of greedy sums of $x$ is countable. Hence, by considering the lexicographic order on $(m,\pi)\in \N\times\Gamma(x)$, we view $\Set{G_{\pi,m}(x)}{\pi\in \Gamma(x),\; m\in\N}$ as a sequence in the following and introduce the notation
\begin{equation*}
    G_m^{\vee\vee}(x)\coloneqq \bigvee_{\pi\in\Gamma(x)}G_{\pi,m}^{\vee}(x). 
\end{equation*}
\begin{thm}\label{UniformGO:Charac}
    Let $\en$ be a semi-normalized basis. TFAE
    \begin{enumerate}
        \item For all $x\in E$, $G_{\pi,m}(x)\xrightarrow{u}x$;\label{UniformGO:Charac:i}
        \item For all $x\in E$, $G_{\pi,m}(x)\xrightarrow{\sigma o}x$;\label{UniformGO:Charac:ii}
        \item For all $x\in E$, $G_{\pi,m}(x)\xrightarrow{o}x$;\label{UniformGO:Charac:iii}
        \item For all $x\in E$, there exists $u\in X$ such that $G_{\pi,m}^{\vee}(x)\leq u$ for all greedy orderings $\pi$ of $x$ and $m\in \N$;\label{UniformGO:Charac:iv}
        \item For all $x\in E$, there exists $C>0$ such that $\norm{G_m^{\vee\vee}(x)}\leq C$ for all greedy orderings $\pi$ of $x$ and $m\in \N$;\label{UniformGO:Charac:v}
        \item There exists $C\geq 1$ such that
        \begin{equation*}\label{UniformGO:max ineq}
            \norm{G_m^{\vee\vee}(x)}\leq C\norm{x}       
        \end{equation*}
        for all $m\in\N$ and $x\in E$ with finite support.    \label{UniformGO:Charac:vi}
    \end{enumerate}
\end{thm}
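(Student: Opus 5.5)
We follow the scheme of Theorem~\ref{Charac:UQG}. The ``sequence'' here is the countable family $\Set{G_{\pi,m}(x)}{\pi\in\Gamma(x),\,m\in\N}$ in the lexicographic order on $(m,\pi)$. Convergence of this family to $x$ means: for the relevant mode, the tail beyond index $m$ is controlled uniformly in $\pi$. The chain \ref{UniformGO:Charac:i}$\Rightarrow$\ref{UniformGO:Charac:ii}$\Rightarrow$\ref{UniformGO:Charac:iii} is then the general implication between convergence notions recalled in the preliminaries.

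For \ref{UniformGO:Charac:iii}$\Rightarrow$\ref{UniformGO:Charac:iv}, use that an order convergent sequence is eventually order bounded. There is $u_\gamma$ and $m_0$ with $\abs{G_{\pi,m}(x)-x}\le u_\gamma$ for all $m\ge m_0$ and all $\pi$. The finitely many sums with $m<m_0$ (the $m$-th greedy sums form a finite set) are bounded by their finite supremum. Hence $u:=\abs{x}+u_\gamma+\bigvee_{m<m_0,\pi}\abs{G_{\pi,m}(x)}$ works, uniformly over $\pi$ and $m$. Next, \ref{UniformGO:Charac:iv}$\Rightarrow$\ref{UniformGO:Charac:v} is immediate: $G_m^{\vee\vee}(x)\le u$, so $\norm{G_m^{\vee\vee}(x)}\le\norm{u}$. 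The implication \ref{UniformGO:Charac:v}$\Rightarrow$\ref{UniformGO:Charac:vi} is the uniform boundedness step. We copy the argument of \cite{UQGBases} for Theorem~\ref{Charac:UQG} \ref{Charac:UQG:v}$\Rightarrow$\ref{Charac:UQG:vi}. Suppose no constant exists. We build finitely supported blocks $x_k$ with successive disjoint supports, with coefficients decreasing fast enough (rescaling so that all coefficients of $x_{k+1}$ are strictly smaller in modulus than those of $x_k$) and with $\norm{G^{\vee\vee}_{m_k}(x_k)}\ge 2^k k\norm{x_k}$, $\norm{x_k}=2^{-k}$. We then put $x=\sum_k x_k$. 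Every greedy ordering of $x$ exhausts $\supp x_1$, then $\supp x_2$, and so on, and restricted to each block it is a greedy ordering of that block. So $G^{\vee\vee}(x)\ge G^{\vee\vee}_{m_k}(x_k)-\abs{\sum_{j<k}x_j}$, contradicting \ref{UniformGO:Charac:v}. To make the blocks exist with coefficients below a given threshold and support beyond a given index, we use the basis constant $K_b$ and semi-normalization, exactly as in the proof of Proposition~\ref{Scaling:Property:1}: removing finitely many coordinates changes the constant by a bounded amount. The perturbation from small tails is controlled by the quasi-greedy type estimate already implied.

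The main obstacle is \ref{UniformGO:Charac:vi}$\Rightarrow$\ref{UniformGO:Charac:i}. Since \ref{UniformGO:Charac:vi} forces \eqref{max ineq:UQG}, $\en$ is uniformly quasi-greedy by Theorem~\ref{Charac:UQG}. We first extend \ref{UniformGO:Charac:vi} from finitely supported vectors to all $x\in E$ by approximation. Given $x$ and $m$, there are finitely many $m$-th greedy sums. We choose a finitely supported $y$ close to $x$ such that every greedy ordering of $x$ truncated at $m$ agrees with a greedy ordering of $y$. For this we perturb coefficients slightly while preserving ties among the top $m$ values, then apply lattice continuity of $\vee$. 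For uniform convergence we pick $x_j$ finitely supported with $\norm{x-x_j}\le 4^{-j}$ and set $e=\sum_j 2^j G^{\vee\vee}(x-x_j)+\dots$, which converges by the maximal inequality. For $m$ beyond the support size of $x_j$ (and beyond the threshold where coefficients of $x-x_j$ are small), each tail $x-G_{\pi,m}(x)$ is a difference of greedy sums of $x-x_j$ up to a controlled error, so $\abs{x-G_{\pi,m}(x)}\le 2^{-j}e$ for all $\pi$. The delicate point is that greedy orderings of $x$ need not be greedy for $x-x_j$. I would handle this as in \cite{UQGBases}: decompose by level sets as in the proof of Proposition~\ref{UniformGO:Equiv:1}, using the estimate $\sum_{\abs{e^*_n}=\mu}\abs{e^*_n(x)e_n}$, which follows from \ref{UniformGO:Charac:vi} via the $C(K)$ inequality.
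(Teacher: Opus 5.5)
Your chain up to \ref{UniformGO:Charac:vi} is essentially the paper's. There is one small correction in the gliding hump: a single rescaling cannot enforce both $\norm{x_k}=2^{-k}$ and the coefficient threshold. So the ratio $\norm{G^{\vee\vee}_{\abs{\supp(x_k)}}(x_k)}/\norm{x_k}$ must also be taken large in terms of $\bigl(\min_{n\in\supp(x_{k-1})}\abs{e^*_n(x_{k-1})}\bigr)^{-1}$, as the paper's $C_k$ does.

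The genuine gap is in \ref{UniformGO:Charac:vi}$\Rightarrow$\ref{UniformGO:Charac:i}. There are three problems:
\begin{itemize}
\item Your regulator $e=\sum_j 2^jG^{\vee\vee}(x-x_j)$ uses a supremum over \emph{all} greedy sums of the infinitely supported vector $x-x_j$. Condition \ref{UniformGO:Charac:vi} only bounds the norms of the finite suprema $G^{\vee\vee}_m$. A norm-bounded increasing sequence in a Banach lattice need not even be order bounded (think of the partial sums of $(1,1,1,\ldots)$ in $c_0$). So $e$ is not defined in $X$; producing such an order bound is exactly what has to be proved.
\item For an arbitrary finitely supported $x_j$, the coefficients of $x-x_j$ on $\supp(x_j)$ are unrelated to those of $x$. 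Hence $x-G_{\pi,m}(x)$ is not a difference of greedy sums of $x-x_j$, and the ``controlled error'' is never identified.
\item Your fallback via level sets does not close this. It gives $\abs{x-G_{\pi,m}(x)}\le\abs{x-S}+\sum_{\abs{e^*_n(x)}=\mu}\abs{e^*_n(x)e_n}$ with $S$ strictly greedy. For the level sums you only have a bound $u$ uniform in $\mu$, which is order boundedness (essentially \ref{UniformGO:Charac:iv}), not decay to $0$ against one fixed regulator. Their norm decay alone does not yield uniform convergence.
\end{itemize}

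The missing idea is to cut $x$ at checkpoints that do not depend on $\pi$. Since \ref{UniformGO:Charac:vi} makes $\en$ uniformly quasi-greedy, there are strictly greedy sums $G_{\pi,m_k}(x)$, the same for every $\pi\in\Gamma(x)$, converging uniformly to $x$. After passing to a subsequence, $y_k\coloneqq G_{\pi,m_{k+1}}(x)-G_{\pi,m_k}(x)$ satisfies $\norm{y_k}\le 2^{-k}$. Each $y_k$ is finitely supported, and every greedy ordering of $x$, restricted to positions $m_k+1,\ldots,m_{k+1}$, is a greedy ordering of $y_k$. Hence $\abs{G_{\pi,m}(x)-G_{\pi,m_k}(x)}\le u_k\coloneqq G^{\vee\vee}_{\abs{\supp(y_k)}}(y_k)$ for all $\pi\in\Gamma(x)$ and $m_k\le m<m_{k+1}$. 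Now $u_k$ is a \emph{finite} supremum, so it lies in $X$, and $\norm{u_k}\le C2^{-k}$ by \ref{UniformGO:Charac:vi}. Thus $e\coloneqq\sum_k ku_k$ converges and $u_k\le e/k$. Then $\abs{G_{\pi,m}(x)-x}\le\abs{G_{\pi,m_k}(x)-x}+e/k$ gives convergence uniformly in $\pi$. This avoids both the infinite suprema and the misalignment of greedy orderings, and needs no level-set decomposition.
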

\begin{proof}
    \ref{UniformGO:Charac:i}$\Rightarrow$ \ref{UniformGO:Charac:ii}$\Rightarrow$ \ref{UniformGO:Charac:iii}$\Rightarrow$\ref{UniformGO:Charac:iv}$\Rightarrow$ \ref{UniformGO:Charac:v} is clear.\par
    We prove~\ref{UniformGO:Charac:v}$\Rightarrow$\ref{UniformGO:Charac:vi} by contradiction. Assume~\ref{UniformGO:Charac:vi} does not hold. Then we claim that $(e_n)_{n\geq 2}$ also fails~\ref{UniformGO:Charac:vi}. Suppose not. Then there exists $M\geq 1$ such that for all $x\in E$ with finite support in $\N\setminus\{1\}$
    \begin{equation*}
        \norm{G_{\abs{\supp(x)}}^{\vee\vee}(x)}\leq M\norm{x}.
    \end{equation*}
    Now consider $x\in E$ with finite support and $e^*_1(x)\neq 0$. Define $y\coloneqq x - e^*_1(x)e_1$. For any $\pi_x\in\Gamma(x)$, we can find $\pi_y\in\Gamma(y)$ such that 
    \begin{equation*}
        G_{\pi_x, \abs{\supp(x)}}^{\vee}(x) \leq \abs{e^*_1(x)e_1} + G_{\pi_y, \abs{\supp(x)-1}}^{\vee}(y). 
    \end{equation*}
    Taking supremum over $\pi_y\in\Gamma(y)$, $\pi_x\in\Gamma(x)$, and then taking norm on both sides gives
    \begin{align*}
        \norm{G_{\abs{\supp(x)}}^{\vee\vee}(x)} 
        &\leq \norm{e^*_1(x)e_1} + \norm{G_{ \abs{\supp(x)-1}}^{\vee\vee}(y)}\\
        &\leq \norm{e^*_1(x)e_1} + M\norm{y} = \norm{e^*_1(x)e_1} + M\norm{x-e^*_1(x)e_1}\\
        &\leq (1+M)\norm{e^*_1(x)e_1} + M\norm{x} \leq ((1+M)K_b + M)\norm{x},
    \end{align*}
    which contradicts the assumption and hence proves the claim. Proceeding inductively, we obtain that $(e_n)_{n\geq N}$ fails~\ref{UniformGO:Charac:vi} for every $N\in\N$. Now we show that there exists $(x_k)\subset E$ such that 
    \begin{enumerate}
        \item $(\supp(x_k))_{k\in\N}$ are finite and disjoint;
        \item $\norm{x_k}\leq 2^{-k}$;
        \item $\norm{G_{\supp(x_k)}^{\vee\vee}(x_k)}\geq k$;
        \item $\max_{n\in \supp(x_{k+1})}\abs{e^*_n(x_k+1)}<\min_{n\in \supp(x_k)}\abs{e^*_n(x_k)}$,
    \end{enumerate}
    which will contradict~\ref{UniformGO:Charac:v}. Indeed, given such a sequence $(x_k)$, we can define $x\coloneqq \sum_{k=1}^{\infty} x_k$. Since $G_{\pi,n}(x_k)$ is the difference of greedy sums of $x$ for all $k\in\N$, $\pi\in\Gamma(x_k)$ and $1\leq n\leq \abs{\supp(x_k)}$,~\ref{UniformGO:Charac:v} would guarantee uniform norm boundedness of $G_{\supp(x_k)}^{\vee\vee}(x_k)$, which contradicts property (iii) in the above construction.\par  
    We proceed with an inductive construction of $(x_k)$. The existence of $x_1$ is given by our assumption. Suppose that we have constructed $x_1, \ldots, x_{k-1}$.
    Let $\mu\coloneqq \min_{n\in \supp(x_{k-1})}\abs{e^*_n(x_{k-1})}$ and $N\coloneqq \max_{n\in \supp(x_{k-1})} n$. By the above argument, we can find an $x_k\in [(e_n)_{n\geq N+1}]$ with finite support such that 
    \begin{equation*}
        \norm{G_{\abs{\supp(x_k)}}^{\vee\vee}(x_k)}\geq C_k\norm{x_k},
    \end{equation*}
    where $C_k\coloneqq\max \{k2^{k}, k\cdot4K_b (\inf_{n\in\N}\norm{e_n})^{-1}\mu^{-1}\}$. Scaling $x_k$ we can take $\norm{x_k}=kC_k^{-1}$, so that 
    \begin{gather*}
        \norm{x_k}\leq 2^{-k}, \quad \max_{n\in \supp(x_k)}\abs{e_n^*(x_k)}\leq 2K_b(\inf_{n\in\N}\norm{e_n})^{-1}\norm{x_k}\leq \mu/2,\\
        \norm{G_{\abs{\supp(x_k)}}^{\vee\vee}(x_k)}\geq k.
    \end{gather*}
    For~\ref{UniformGO:Charac:vi}$\Rightarrow$\ref{UniformGO:Charac:i}, we first note that~\ref{UniformGO:Charac:vi} implies that $\en$ is uniformly quasi-greedy by Thm.~\ref{Charac:UQG}~\ref{Charac:UQG:vi}. Hence, for a given $x\in E$, we can find strictly greedy sums $(G_{\pi,m_k}(x))_{k\in\N}$ such that $G_{\pi,m_k}(x)\xrightarrow[k\to\infty]{u}x$. Passing to a further subsequence, we may assume that
    \begin{equation*}
        \norm{G_{\pi,m_k+1}(x) - G_{\pi,m_k}(x)}\leq 2^{-k}.
    \end{equation*}
    Define $y_k\coloneqq G_{\pi,m_k+1}(x) - G_{\pi,m_k}(x)$ and $u_k\coloneqq G_{\abs{supp(y_k)}}^{\vee\vee}(y_k)$. By~\ref{UniformGO:Charac:vi}, $\norm{u_k}\leq C\norm{y_k}\leq C2^{-k}$, hence we may define $e\coloneqq \sum_{k=1}^{\infty}ku_k$. It follows that $u_k\leq \frac{e}{k}$, which implies that $u_k\xrightarrow{u}0$.\par
    Since $\abs{G_{\pi,m_k}(x) - x} + u_k\xrightarrow{u}0$, there exists $f\in X$ such that for any $\epsilon>0$ there exists $N_{\epsilon}$ such that 
    \begin{equation*}
        \forall k\geq N_{\epsilon}:\; \abs{G_{\pi,m_k}(x) - x} + u_k\leq \epsilon f. 
    \end{equation*}
    Fix $\epsilon$ and find the required $N_{\epsilon}$. For $m\geq m_{N_{\epsilon}}$, there exists $k\geq N_{\epsilon}$ such that $m_k\leq m<m_{k+1}$. For any $\pi\in\Gamma(x)$ we have then
    \begin{equation*}
        \abs{G_{\pi, m}(x) - x} 
        \leq \abs{G_{\pi, m_k}(x) - x} + \abs{G_{\pi, m}(x) - G_{\pi, m_k}(x)}
        \leq \abs{G_{\pi, m_k}(x) - x} + u_k\leq \epsilon f,
    \end{equation*}
    which shows that $G_{\pi,m}(x)\xrightarrow{u}x$.
\end{proof}
\begin{remark}
    By considering initial segments of $x\in E$ and passing to the limit, it can be shown that Thm.~\ref{UniformGO:Charac}\ref{UniformGO:Charac:vi} also holds for $x\in E$ with infinite support.
\end{remark}
\begin{remark}\label{UniformGO:Charac:Rem:1}
    Since Thm.~\ref{UniformGO:Charac}~\ref{UniformGO:Charac:vi} does not depend on the ambient space, we can replace $X$ by $X^{**}$ in~\ref{UniformGO:Charac:i}-\ref{UniformGO:Charac:vi}.
\end{remark}
The least constant $C$ satisfying Thm.~\ref{UniformGO:Charac}~\ref{UniformGO:Charac:vi} will be referred to as the \textbf{absolutely quasi-greedy constant} of $\en$ and denoted as $K_{aqg}$. Parallel to~\cite[Prop. 3.20]{UQGBases}, the following proposition shows that absolutely quasi-greedy bases enjoy the property of being absolute for constant coefficients.
\begin{prop}\label{UniformGO:Property:1}
  Let $\en$ be an absolutely quasi-greedy basis. Then there exists a constant $C\geq 1$ such that
  \begin{equation*}
    \forall (n_i)_{i=1}^{k}\subset \N:\, \forall (\epsilon_i)_{i=1}^k\subset \{\pm 1\}^k:\; \norm{\sum_{i=1}^k \abs{e_{n_i}}} \leq C \norm{\sum_{i=1}^k \epsilon_i e_{n_i}}. 
  \end{equation*}
\end{prop}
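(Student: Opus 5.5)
Set $x\coloneqq\sum_{i=1}^k \epsilon_i e_{n_i}$, with the $n_i$ distinct. Every coefficient of $x$ has modulus $1$, so every permutation $\pi$ of $\N$ that lists $n_1,\ldots,n_k$ first (in any order) and then the remaining indices is a greedy ordering of $x$. In particular, every subset $B\subset\{n_1,\ldots,n_k\}$ arises as $\pi(\{1,\ldots,\abs{B}\})$ for some $\pi\in\Gamma(x)$. This gives
\begin{equation*}
  \abs{\sum_{n_i\in B}\epsilon_i e_{n_i}} = \abs{G_{\pi,\abs{B}}(x)} \leq G_{k}^{\vee\vee}(x)
  \quad\text{for all } B\subset\{n_1,\ldots,n_k\}.
\end{equation*}
So the whole family of partial sums over subsets is dominated by a single element. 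By Thm.~\ref{UniformGO:Charac}~\ref{UniformGO:Charac:vi}, which applies since $x$ has finite support, that element satisfies $\norm{G_k^{\vee\vee}(x)}\leq K_{aqg}\norm{x}$.

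Next, pass from subset sums to the sum of absolute values with the lattice inequality already used in the proof of Prop.~\ref{UniformGO:Equiv:1}. For $y_1,\ldots,y_k\in X$,
\begin{equation*}
  \sum_{i=1}^k \abs{y_i}\leq 2\bigvee_{B\subset\{1,\ldots,k\}}\abs{\sum_{i\in B}y_i}.
\end{equation*}
This inequality holds for real numbers: split the indices into those with $y_i\geq 0$ and those with $y_i<0$. It then transfers to an arbitrary Archimedean vector lattice, because both sides are given by a lattice-linear expression in $y_1,\ldots,y_k$ (or, as the paper does, via the $C(K)$ representation of the principal ideal generated by $\sum\abs{y_i}$). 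Apply it with $y_i=\epsilon_i e_{n_i}$, noting $\abs{y_i}=\abs{e_{n_i}}$:
\begin{equation*}
  \sum_{i=1}^k\abs{e_{n_i}}\leq 2\, G_k^{\vee\vee}(x).
\end{equation*}
The norm on $X$ is monotone, so
\begin{equation*}
  \norm{\sum_{i=1}^k\abs{e_{n_i}}}\leq 2K_{aqg}\norm{\sum_{i=1}^k\epsilon_ie_{n_i}}.
\end{equation*}
Hence $C=2K_{aqg}$ works.

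The only point needing care is making sure every subset $B$ really is realized as an initial segment of some greedy ordering, so that the supremum defining $G^{\vee\vee}_k(x)$ covers all subsets. This holds because all nonzero coefficients tie in modulus. With constant-modulus coefficients the argument needs neither a separate treatment of the signs nor the decomposition into level sets $B^a_\mu$; I expect no further obstacle.
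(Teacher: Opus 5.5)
Your proof is correct, and it takes a different route from the paper's.

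The paper's argument has two stages:
\begin{enumerate}
\item It uses the sign-equivalence for quasi-greedy bases, $\norm{\sum\epsilon_ie_{n_i}}\approx\norm{\sum e_{n_i}}$ up to a factor $2K_{qg}$, to reduce to the case where all signs are $+1$.
\item It then argues by contradiction with a gliding hump. Assuming no constant works, it builds disjoint finite blocks $B_l$ with $\norm{\sum_{B_l}\abs{e_n}}\geq 2^l\norm{\sum_{B_l}e_n}$. It glues them with weights $a_l$ into a single $x\in E$ and contradicts the level-set domination of Prop.~\ref{UniformGO:Equiv:1}~\ref{UniformGO:Equiv:1:ii} at $\mu=a_l$.
\end{enumerate}

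You apply the maximal inequality of Thm.~\ref{UniformGO:Charac}~\ref{UniformGO:Charac:vi} directly to the finitely supported vector $x=\sum\epsilon_ie_{n_i}$. This is legitimate, since absolute quasi-greediness is by definition condition~\ref{UniformGO:Charac:iv}, which implies~\ref{UniformGO:Charac:vi}. Because all coefficients tie in modulus, every subset sum is an initial greedy sum, so $G^{\vee\vee}_k(x)$ dominates all of them. The lattice inequality $\sum\abs{y_i}\leq 2\bigvee_B\abs{\sum_B y_i}$ then finishes the proof.

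Your route buys three things:
\begin{enumerate}
\item An explicit constant $C=2K_{aqg}$.
\item No sign reduction, since $\abs{\epsilon_ie_{n_i}}=\abs{e_{n_i}}$.
\item No fresh gliding hump. The required uniformity is already contained in the implication~\ref{UniformGO:Charac:v}$\Rightarrow$\ref{UniformGO:Charac:vi} of Thm.~\ref{UniformGO:Charac}, whose proof is essentially the same device the paper repeats here.
\end{enumerate}
The paper's route uses only the qualitative level-set characterization of Prop.~\ref{UniformGO:Equiv:1} together with standard quasi-greedy facts. It is therefore independent of Thm.~\ref{UniformGO:Charac}, but its constant is not explicit.
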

\begin{proof}
  Since $\en$ is quasi-greedy, $\norm{\sum_{i=1}^k \epsilon_i e_{n_i}}$ and $\norm{\sum_{i=1}^k e_{n_i}}$ are equivalent up to a multiplicative constant of $2K_{qg}$ by~\cite[Prop. 10.2.10]{TopicsBanach}. Hence, it suffices to show the existence of a constant $C\geq 1$ such that 
  \begin{equation}\label{Ineq:3}
      \forall (n_i)_{i=1}^{k}\subset \N:\; \norm{\sum_{i=1}^k \abs{e_{n_i}}} \leq C \norm{\sum_{i=1}^k e_{n_i}}. 
  \end{equation}
  We claim that if $(e_n)_{n>m}$ satisfies~\eqref{Ineq:3} for some $m\in\N$, namely
  \begin{equation*}
    \forall (n_i)_{i=1}^{k}\subset \N\setminus\{1,\ldots, m\}: \norm{\sum_{i=1}^k \abs{e_{n_i}}} \leq C\norm{\sum_{i=1}^k e_{n_i}}
  \end{equation*}
  for some $C\geq 1$, then $\en$ satisfies~\eqref{Ineq:3} as well. Indeed, let $(n_i)_{i=1}^{k}$ be an arbitrary finite subset of $\N$. We have
  \begin{equation*}
    \norm{\sum_{i=1}^k \abs{e_{n_i}}}
    \leq \norm{\sum_{n_i\leq m} \abs{e_{n_i}}} + \norm{\sum_{n_i>m} \abs{e_{n_i}}} 
    \leq \sum_{n_i\leq m}\norm{e_{n_i}} + \norm{\sum_{n_i>m} \abs{e_{n_i}}}.
  \end{equation*}
  Since $\en$ is quasi-greedy, $\norm{e_{n_i}}\leq K_{qg} \norm{\sum_{i=1}^k e_{n_i}}$ and hence
  \begin{equation*}
    \norm{\sum_{i=1}^k \abs{e_{n_i}}}
    \leq mK_{qg} \norm{\sum_{i=1}^k e_{n_i}} + \norm{\sum_{n_i>m} \abs{e_{n_i}}}
    \leq (mK_{qg}+C) \norm{\sum_{i=1}^k e_{n_i}}.
  \end{equation*}
  Now assume that $\en$ does not satisfy~\eqref{Ineq:3}. Then there exists $(n_i^1)_{i=1}^{k_1}\subset \N$ such that $\norm{\sum_{i=1}^{k_1} \abs{e_{n_i^1}}} \geq 2 \norm{\sum_{i=1}^{k_1} e_{n_i^1}}$. Let $m_1 \coloneqq \max_{i=1}^{k_1} n_i^1$. By the above argument, $(e_n)_{n>m_1}$ does not satisfy~\eqref{Ineq:3}, so there exists $(n_i^2)_{i=1}^{k_2}\subset \N\setminus\{1,\ldots,m_1\}$ such that $\norm{\sum_{i=1}^{k_2} \abs{e_{n_i^2}}} \geq 4 \norm{\sum_{i=1}^{k_2} e_{n_i^2}}$. Proceeding iteratively, we find a disjoint family $(B_l)_{l\in\N}$ of finite subsets of $\N$ such that
  \begin{equation*}
    \norm{\sum_{n\in B_l} \abs{e_n}} \geq 2^l \norm{\sum_{n\in B_l} e_n}.
  \end{equation*}
  Let us define $(a_l)_{l\in\N}\subset\R_{>0}$ such that $a_l\norm{\sum_{n\in B_l} e_n} = 2^{-l/2}$. By the quasi-greediness of $\en$, $\norm{\sum_{n\in C} a_l e_n} \leq K_{qg}\norm{\sum_{n\in B_l}a_l e_n} = K_{qg}2^{-l/2}$ for all subsets $C$ of $B_l$, which implies that the series $\sum_{l=1}^{\infty} \sum_{n\in B_l} a_l e_n$ converges to some $x\in E$. By assumption and Prop.~\ref{UniformGO:Equiv:1}, there exists $u\in X$ such that
  \begin{equation}\label{Ineq:4}
    \forall \mu>0:\; \sum_{\abs{e^*_n(x)}=\mu} \abs{e^*_n(x)e_n} \leq u.
  \end{equation}
  However, choosing $\mu=a_l$ we obtain that
  \begin{equation*}
    \norm{\sum_{n\in B_l} \abs{a_l e_n}} = a_l \norm{\sum_{n\in B_l} \abs{e_n}}
    \geq 2^l a_l\norm{\sum_{n\in B_l} e_n} = 2^{l/2}\xrightarrow{l\to\infty}\infty,
  \end{equation*}
  which contradicts~\eqref{Ineq:4}.
\end{proof}
\begin{remark}
    Prop.~\ref{UniformGO:Property:1} implies that the Rademacher sequence is not absolutely quasi-greedy, which is a simplified version of the counterexample in~\cite[Example 3.10]{UQGBases}. In particular, we see that permutability does not imply absolutely quasi-greediness. On the other hand, the Haar system in $L^p(\interval{0}{1})$, $1<p<\infty$, is absolutely quasi-greedy but not permutable~\cite[Example 6.2]{BibasicSeq}. 
\end{remark}
We proceed with some results on the stability under scaling of absolutely quasi-greediness, the development of which resembles the case of uniformly quasi-greediness.
\begin{prop}\label{Scaling:Equiv:2}
  Let $\en$ be an absolutely quasi-greedy basis and $a>1$. TFAE
  \begin{enumerate}
    \item For all $\lambdan$ with $1\leq\abs{\lambda_n}\leq a$, $\lambdaen$ is absolutely quasi-greedy;\label{Scaling:Equiv:2:i}
    \item For all $x\in E$, there exists $u\in X$ such that 
    \begin{equation*}
      \forall k\in\Z:\;\sum_{A^a_k(x)}\abs{e^*_n(x)e_n}\leq u;
    \end{equation*}\label{Scaling:Equiv:2:ii}
    \item For all $x\in E$, there exists $C>0$ such that 
    \begin{equation*}
      \forall N\in\N:\;\norm{\bigvee_{k=-N}^{k=N}\sum_{A^a_k(x)}\abs{e^*_n(x)e_n}}\leq C.
    \end{equation*}\label{Scaling:Equiv:2:iii}
  \end{enumerate}
\end{prop}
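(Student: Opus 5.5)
The plan is to follow the pattern of Prop.~\ref{Scaling:Equiv:1}, with Prop.~\ref{UniformGO:Equiv:1} playing the role that Thm.~\ref{Charac:UQG} played there. The cycle is \ref{Scaling:Equiv:2:i}$\Rightarrow$\ref{Scaling:Equiv:2:ii}$\Rightarrow$\ref{Scaling:Equiv:2:iii}$\Rightarrow$\ref{Scaling:Equiv:2:i}, and the middle implication is trivial.

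For \ref{Scaling:Equiv:2:i}$\Rightarrow$\ref{Scaling:Equiv:2:ii}, fix $x\in E$ and define $\lambda_n\coloneqq\abs{e^*_n(x)}/\min_{m\in A^a_k(x)}\abs{e^*_m(x)}$ for $n\in A^a_k(x)$, and $\lambda_n\coloneqq 1$ off $\supp(x)$. A minimum exists because $A^a_k(x)$ is finite, and $1\leq\lambda_n\le a$. The coordinates of $x$ with respect to $\lambdaen$ are $e^*_n(x)/\lambda_n$, and their modulus equals the constant $\mu_k\coloneqq\min_{A^a_k(x)}\abs{e^*_m(x)}$ on $A^a_k(x)$. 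These constants are distinct for distinct $k$, since $\mu_k\in\interval[open left]{a^k}{a^{k+1}}$. Thus $A^a_k(x)$ is contained in the level set $\{n:\abs{e^*_n(x)}/\lambda_n=\mu_k\}$, and in fact equals it.

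Since $\lambdaen$ is absolutely quasi-greedy, and in particular uniformly quasi-greedy, the implication \ref{UniformGO:Equiv:1:i}$\Rightarrow$\ref{UniformGO:Equiv:1:ii} of Prop.~\ref{UniformGO:Equiv:1} gives $v\in X$ with $\sum_{A^a_k(x)}\abs{\tfrac{e^*_n(x)}{\lambda_n}\lambda_ne_n}\le v$ for all $k$. This is exactly \ref{Scaling:Equiv:2:ii}. If the hypothesis of Prop.~\ref{UniformGO:Equiv:1} is a worry, note that the proof of \ref{UniformGO:Equiv:1:i}$\Rightarrow$\ref{UniformGO:Equiv:1:ii} only uses absolute quasi-greediness.

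For \ref{Scaling:Equiv:2:iii}$\Rightarrow$\ref{Scaling:Equiv:2:i}, let $1\le\abs{\lambda_n}\le a$. I first check that $\lambdaen$ is uniformly quasi-greedy. By Prop.~\ref{Scaling:Equiv:1} it suffices to verify its condition \ref{Scaling:Equiv:1:iii}. Any partial sum $\sum_{i\le n}e^*_{\pi(s^k_i)}(x)e_{\pi(s^k_i)}$ is pointwise dominated in modulus by $\sum_{A^a_k(x)}\abs{e^*_m(x)e_m}$. Hence the norm in \ref{Scaling:Equiv:1:iii} is bounded by the norm in \ref{Scaling:Equiv:2:iii}, and so $\lambdaen$ is uniformly quasi-greedy.

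Next I verify condition \ref{UniformGO:Equiv:1:ii} of Prop.~\ref{UniformGO:Equiv:1} for $\lambdaen$. The level sets $\tilde L_\mu\coloneqq\{n:\abs{e^*_n(x)}/\abs{\lambda_n}=\mu\}$ satisfy $\tilde L_\mu\subset\{n:\mu\le\abs{e^*_n(x)}\le a\mu\}$. This set is contained in the union of at most two consecutive blocks $A^a_{k}(x)\cup A^a_{k+1}(x)$, where $k$ is chosen with $\mu\in\interval[open left]{a^k}{a^{k+1}}$; if $\mu=a^{k+1}$ exactly, the relevant blocks are $A^a_k(x)$ and $A^a_{k+1}(x)$, so in every case two blocks suffice. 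Therefore
\[
\sum_{\tilde L_\mu}\abs{e^*_n(x)e_n}\le 2\bigvee_{k\in\Z}\sum_{A^a_k(x)}\abs{e^*_n(x)e_n},
\]
and it remains to produce an order bound for the right-hand side.

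The main obstacle is that \ref{Scaling:Equiv:2:iii} only gives norm-boundedness of the finite suprema, while \ref{Scaling:Equiv:2:ii} (and what we need) is an actual order bound $u$ in $X$. So the real content is \ref{Scaling:Equiv:2:iii}$\Rightarrow$\ref{Scaling:Equiv:2:ii}. I would first try to pass through $X^{**}$, in the spirit of Remark~\ref{UniformGO:Charac:Rem:1}. The increasing norm-bounded sequence $w_N\coloneqq\bigvee_{\abs{k}\le N}\sum_{A^a_k(x)}\abs{e^*_n(x)e_n}$ has a supremum in the Dedekind complete lattice $X^{**}$ with the Fatou property. Applying the above argument in $X^{**}$ shows that $\lambdaen$ satisfies Prop.~\ref{UniformGO:Equiv:1}\ref{UniformGO:Equiv:1:ii}, and hence Thm.~\ref{UniformGO:Charac}\ref{UniformGO:Charac:iv}, with $X^{**}$ as the ambient lattice. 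The greedy suprema are finite lattice operations, so they are computed identically in $X$ and $X^{**}$. Therefore \ref{UniformGO:Charac:v} holds in $X$, and Thm.~\ref{UniformGO:Charac} transfers absolute quasi-greediness back to $X$.

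Should this route be problematic, the fallback is to adapt the gliding-hump argument of Thm.~\ref{UniformGO:Charac}. One would pass to tails where the blocks are small, so that a sum $e=\sum_j j\,w^{(j)}$ of weighted tail suprema converges, exactly as in the \ref{UniformGO:Charac:vi}$\Rightarrow$\ref{UniformGO:Charac:i} step.
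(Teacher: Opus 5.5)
Your proposal is correct and follows the paper's route. You use the same scaling that makes $\abs{e^*_n(x)}/\lambda_n$ constant on each $A^a_k(x)$ for \ref{Scaling:Equiv:2:i}$\Rightarrow$\ref{Scaling:Equiv:2:ii}, and for \ref{Scaling:Equiv:2:iii}$\Rightarrow$\ref{Scaling:Equiv:2:i} the same two-block covering of the level sets, combined with passing to $X^{**}$ and back via Remark~\ref{UniformGO:Charac:Rem:1}; checking uniform quasi-greediness of $\lambdaen$ directly in $X$ through Prop.~\ref{Scaling:Equiv:1}~\ref{Scaling:Equiv:1:iii} is a harmless shortcut. One justification should be fixed: the supremum of the increasing norm-bounded sequence $(w_N)$ exists in $X^{**}$ because dual Banach lattices are monotonically complete (norm-bounded increasing nets of positive elements have suprema), whereas Dedekind completeness plus the Fatou property would not suffice (e.g.\ $c_0$).
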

\begin{proof}
  For~\ref{Scaling:Equiv:2:i}$\Rightarrow$~\ref{Scaling:Equiv:2:ii}, assume that~\ref{Scaling:Equiv:1:i} holds and let $x\in E$. Using the same idea as in the proof of Prop.~\ref{Scaling:Equiv:1}, we can find a scaling $\lambdan$ with $1\leq \lambda_n \leq a$ such that $\frac{\abs{e^*_n(x)}}{\lambda_n}$ is constant on $A_k^a(x)$ for all $k\in \Z$. Since $\lambdaen$ is absolutely quasi-greedy by assumption,~\ref{Charac:UQG:ii} follows from Prop.~\ref{UniformGO:Equiv:1}~\ref{UniformGO:Equiv:1:ii}. \par 
  For~\ref{Scaling:Equiv:2:ii}$\Rightarrow$~\ref{Scaling:Equiv:2:i}, we first observe that~\ref{Scaling:Equiv:2:ii} is stronger than Prop.~\ref{Scaling:Equiv:1}~\ref{Scaling:Equiv:1:ii} and thus implies that $\lambdaen$ is uniformly quasi-greedy for all $\lambdan$ with $1\leq \abs{\lambda_n} \leq a$. Fix such a scaling $\lambdan$ and let $\mu>0$ be arbitrary. Let $k\in\Z$ such that $\mu\in\interval[open left]{a^k}{a^{k+1}}$. It holds that
  \begin{align*}
      \Set{n\in\N}{\frac{\abs{e_n^*(x)}}{\abs{\lambda_n}}=\mu}
      &\subset \Set{n\in\N}{\frac{\abs{e_n^*(x)}}{\abs{\lambda_n}}\in \interval[open left]{a^k}{a^{k+1}}}\\
      &\subset \Set{n\in\N}{\abs{e_n^*(x)}\in \interval[open left]{a^k}{a^{k+2}}} = A^a_{k}(x)\sqcup A^a_{k+1}(x),
  \end{align*}
  which implies that
  \begin{align*}
      \sum_{\frac{\abs{e^*_n(x)}}{\abs{\lambda_n}}=\mu}
      \abs{e^*_n(x)e_n}
      \leq \sum_{A^a_{k}(x)\sqcup A^a_{k+1}(x)}
      \abs{e^*_n(x)e_n}
      &= \sum_{A^a_{k}(x)}
      \abs{e^*_n(x)e_n} + 
      \sum_{A^a_{k+1}(x)}
      \abs{e^*_n(x)e_n}
      \leq 2u
  \end{align*}
  by the assumption of~\ref{Scaling:Equiv:2:ii}. By Prop.~\ref{UniformGO:Equiv:1} we deduce that $\lambdaen$ is absolutely quasi-greedy.\par
  \ref{Scaling:Equiv:2:ii}$\Rightarrow$\ref{Scaling:Equiv:2:iii} is clear. It remains to prove that \ref{Scaling:Equiv:2:iii}$\Rightarrow$\ref{Scaling:Equiv:2:i}. Since $\bigvee_{k=-N}^{k=N}\sum_{A^a_k(x)}\abs{e^*_n(x)e_n}$ increases with $N$,~\ref{Scaling:Equiv:2:iii} implies that $\sum_{A^a_k(x)}\abs{e^*_n(x)e_n}$ is uniformly order bounded over $k\in\Z$ in $X^{**}$. Since~\ref{Scaling:Equiv:2:i}$\Leftrightarrow$\ref{Scaling:Equiv:2:ii} in $X^{**}$, we obtain that $\lambdaen$ is absolutely quasi-greedy for all $\lambdan$ with $1\leq \abs{\lambda_n} \leq a$ if we consider $E$ to be a subspace of $X^{**}$. We conclude by Remark~\ref{UniformGO:Charac:Rem:1}.
\end{proof}
\begin{remark}
    In the proof of~\ref{Scaling:Equiv:2:i}$\Rightarrow$~\ref{Scaling:Equiv:2:ii}, we see that if $\en$ is absolutely quasi-greedy for all scalings $\lambdan$ with $1\leq \abs{\lambda_n}\leq a$, then the greedy sums of $x\in E$ are uniformly order bounded over all greedy orderings with respect to all such scalings.
\end{remark}
\begin{prop}\label{Scaling:Uncond:Uniform:1}
  Let $\en$ be an unconditional absolutely quasi-greedy basis. Then $\lambdaen$ is absolutely quasi-greedy for all $\lambdan$ with $0<\inf_{n\in\mathbb{N}}\abs{\lambda_n}\leq\sup_{n\in\mathbb{N}}\abs{\lambda_n}<\infty$.
\end{prop}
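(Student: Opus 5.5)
Following the proof of Prop.~\ref{Scaling:Uncond:1}, I would reduce to scalings with $1\leq\abs{\lambda_n}\leq a$ for a fixed $a>1$. Any scaling with bounded quotient is a constant scaling composed with finitely many such scalings, and constant scalings clearly preserve absolute quasi-greediness. Unconditionality is also preserved under such scalings, since~\eqref{max ineq:uncond} involves only sign changes. So if the statement holds for scalings in $[1,a]$ for every unconditional absolutely quasi-greedy basis, we can iterate. For a fixed $a$, it then suffices by Prop.~\ref{Scaling:Equiv:2} to verify condition~\ref{Scaling:Equiv:2:ii}. That is, for every $x\in E$ we need some $u\in X$ with $\sum_{A^a_k(x)}\abs{e^*_n(x)e_n}\leq u$ for all $k\in\Z$.

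To produce $u$, I would use the same scaling trick as before. Define $\lambda_n\coloneqq\abs{e^*_n(x)}/\min_{m\in A^a_k(x)}\abs{e^*_m(x)}$ for $n\in A^a_k(x)$, and $\lambda_n\coloneqq 1$ off $\supp(x)$. Then $1\leq\lambda_n\leq a$, and $\abs{e^*_n(x)}/\lambda_n$ is constant on each $A^a_k(x)$. By unconditionality and $\lambda_n\geq 1$, the vector $\bar x\coloneqq\sum_n \frac{e^*_n(x)}{\lambda_n}e_n$ lies in $E$, and~\eqref{eq:uncond:property} gives $\norm{\bar x}\leq K_u\norm{x}$. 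Each $A^a_k(x)$ is contained in a level set $\Set{n}{\abs{e^*_n(\bar x)}=\mu_k}$ of $\bar x$ with respect to the original basis $\en$. Since $\en$ is absolutely quasi-greedy, Prop.~\ref{UniformGO:Equiv:1}~\ref{UniformGO:Equiv:1:ii} applied to $\bar x$ yields $v\in X$ with
\begin{equation*}
\sum_{A^a_k(x)}\abs{\tfrac{e^*_n(x)}{\lambda_n}e_n}\leq\sum_{\abs{e^*_n(\bar x)}=\mu_k}\abs{e^*_n(\bar x)e_n}\leq v\quad\text{for all } k\in\Z.
\end{equation*}
Since $\lambda_n\leq a$ and the summands are positive, $\sum_{A^a_k(x)}\abs{e^*_n(x)e_n}\leq a\sum_{A^a_k(x)}\abs{\tfrac{e^*_n(x)}{\lambda_n}e_n}\leq av$. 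So $u\coloneqq av$ works, which gives~\ref{Scaling:Equiv:2:ii}, and Prop.~\ref{Scaling:Equiv:2} concludes.

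This case is simpler than Prop.~\ref{Scaling:Uncond:1}: the quantity in~\ref{Scaling:Equiv:2:ii} is a sum of absolute values, so no convex-combination-of-signs argument is needed. Monotonicity of $\abs{\cdot}$ absorbs the factor $\lambda_n\leq a$ directly. The main thing to check carefully is that $\bar x$ is a genuine element of $E$ whose level sets contain the $A^a_k(x)$. Two points need verification here. First, convergence of the series defining $\bar x$: the coefficients are multiplied by the bounded factors $1/\lambda_n\in[1/a,1]$, so unconditionality gives convergence. Second, distinct $k$ might yield the same constant $\mu_k=\min_{A^a_k(x)}\abs{e^*_n(x)}$. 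This is harmless, because the bound from Prop.~\ref{UniformGO:Equiv:1} is over entire level sets, and each $A^a_k(x)$ is a subset of one.

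A second point to check is the reduction step: absolute quasi-greediness must be preserved under multiplication by a constant, and the iterated scalings must again be unconditional and absolutely quasi-greedy. Both are immediate from the definitions, since constant scalings do not change greedy orderings or greedy sums.
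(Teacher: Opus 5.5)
Your proof is correct and follows the paper's own argument. You choose $\lambda_n$ so that $\abs{e^*_n(x)}/\lambda_n$ is constant on each $A^a_k(x)$, apply Prop.~\ref{UniformGO:Equiv:1}~\ref{UniformGO:Equiv:1:ii} to $\bar x$, and absorb the factor $\lambda_n\leq a$ to get condition~\ref{Scaling:Equiv:2:ii} of Prop.~\ref{Scaling:Equiv:2}. Two minor remarks: no iteration is needed, because $a>1$ is arbitrary and you can take any $a>1$ at least $\sup_n\abs{\lambda_n}/\inf_n\abs{\lambda_n}$; and distinct $k$ can never share the same $\mu_k$, since $\mu_k\in\interval[open left]{a^k}{a^{k+1}}$.
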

\begin{proof}
  Let $a>1$. We establish Prop.~\ref{Scaling:Equiv:2}~\ref{Scaling:Equiv:2:ii} by exploiting unconditionality. Using the same idea as in the proof of Prop.~\ref{Scaling:Equiv:1}, we can find a scaling $\lambdan$ with $1\leq \lambda_n \leq a$ such that $\frac{\abs{e^*_n(x)}}{\lambda_n}=\min_{n\in A_k^a(x)}(\abs{e^*_n(x)})$ on $A_k^a(x)$ for all $k\in \Z$. By the unconditionality of $\en$, we can define $\bar{x}\coloneqq \sum_{n=1}^{\infty} \frac{e^*_n(x)}{\lambda_n}e_n$. Applying Prop.~\ref{UniformGO:Equiv:1} and taking $\mu=\min_{n\in A_k^a(x)}(\abs{e^*_n(x)})$, there is a $u\in X$ independent of $k\in\Z$ such that
  \begin{equation*}
    \sum_{A_k^a(x)} \abs{\frac{e^*_n(x)}{\lambda_n}e_n}\leq u.
  \end{equation*}
  Prop.~\ref{Scaling:Equiv:2}~\ref{Scaling:Equiv:2:ii} then follows from the boundedness of $\lambdan$.
\end{proof}
\begin{ex}
    Since the Haar system in $L^p(\interval{0}{1})$, $1<p<\infty$, is unconditional and absolutely quasi-greedy, Prop.~\ref{Scaling:Uncond:Uniform:1} asserts that the scaled Haar system is still absolutely quasi-greedy for all scalings with bounded quotient. 
\end{ex}
\begin{prop}\label{Scaling:UniformGO:Property:1}
  If $\lambdaen$ is absolutely quasi-greedy for all $\lambdan$ with $1\leq\abs{\lambda_n}\leq a$, then the supremum of the absolutely quasi-greedy constants over all such $\lambdaen$ is finite. 
\end{prop}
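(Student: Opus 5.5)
We will argue by contradiction, following the proof of Prop.~\ref{Scaling:Property:1} and replacing uniform quasi-greediness by absolute quasi-greediness throughout. The constants involved are the absolutely quasi-greedy constants of Thm.~\ref{UniformGO:Charac}~\ref{UniformGO:Charac:vi}. So suppose the supremum of $\tilde K_{aqg}$ over all scalings $\lambdan$ with $1\leq\abs{\lambda_n}\leq a$ is infinite. Then for every $M$ there are a scaling $\lambdan$, a finitely supported $x\in E$ and a greedy ordering $\pi$ of $x$ with respect to $\lambdaen$ such that $\norm{\tilde G^{\vee}_{\pi,\abs{\supp(x)}}(x)}\geq M\norm{x}$. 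Taking a supremum over $\pi\in\tilde\Gamma(x)$ only enlarges the left-hand side, so we may work with $\tilde G^{\vee\vee}$ and obtain the same inequality.

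\textbf{Step 1 (tail reduction).} We show that the supremum remains infinite for the tail systems $(\lambda_n e_n)_{n\geq N}$, for every $N$. As in Prop.~\ref{Scaling:Property:1}, suppose it is bounded by $M$ for $(\lambda_ne_n)_{n\geq2}$. For finitely supported $x$, set $y=x-e_1^*(x)e_1$. Every greedy ordering of $x$ with respect to $\lambdaen$ induces a greedy ordering of $y$ with respect to $(\lambda_n e_n)_{n\geq 2}$, by deleting the index $1$. Hence
\begin{equation*}
\tilde G^{\vee\vee}_{\abs{\supp(x)}}(x)\leq \abs{e_1^*(x)e_1}+\tilde G^{\vee\vee}_{\abs{\supp(x)}-1}(y).
\end{equation*}
Taking norms and using the basis constant $K_b$ of $\en$ gives $\tilde K_{aqg}\leq (1+M)K_b+M$ for every scaling. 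This is a contradiction, and induction extends the claim to every $N$. The scalar coefficients do not change the basis constant estimate, since $e_1^*(x)$ is unaffected by the scaling.

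\textbf{Step 2 (gluing).} Inductively we choose finitely supported blocks $x_k$ together with scalings $(\lambda^k_n)$. Each $x_k$ is supported beyond $\max\supp(x_{k-1})$, and for each we fix a greedy ordering $\pi_k$ with respect to $(\lambda^k_ne_n)$ such that
\begin{equation*}
\norm{\tilde G^{\vee}_{\pi_k,\abs{\supp(x_k)}}(x_k)}\geq k\norm{x_k}.
\end{equation*}
Since the supports are disjoint, we define a single scaling $\lambdan$ that equals $\lambda^k_n$ on $\supp(x_k)$ and equals $1$ elsewhere. It satisfies $1\leq\abs{\lambda_n}\leq a$. By hypothesis $\lambdaen$ is absolutely quasi-greedy, so its constant $\tilde K_{aqg}$ is finite. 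On the other hand, $\tilde G^{\vee}_{\pi_k,\abs{\supp(x_k)}}(x_k)$ depends only on the coefficients of $\lambdaen$ on $\supp(x_k)$, which coincide with those of $(\lambda^k_ne_n)$. Applying Thm.~\ref{UniformGO:Charac}~\ref{UniformGO:Charac:vi} to the finitely supported vector $x_k$ therefore gives $k\norm{x_k}\leq \tilde K_{aqg}\norm{x_k}$ for every $k$, which is impossible.

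\textbf{Main obstacle.} The delicate point is Step 2. We must make sure that the greedy orderings of $x_k$ with respect to the glued scaling are the same as those with respect to $(\lambda^k_n e_n)$. This holds because the greedy condition on $x_k$ only involves the ratios $\abs{e^*_n(x_k)}/\abs{\lambda_n}$ for $n\in\supp(x_k)$; indices outside the support have coefficient zero and are placed last in any greedy ordering. We also need that Thm.~\ref{UniformGO:Charac}~\ref{UniformGO:Charac:vi} provides one constant valid for all finitely supported vectors. Both points are immediate from the definitions, so no construction like that of Thm.~\ref{UniformGO:Charac} is required.
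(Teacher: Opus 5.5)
Your overall structure (tail reduction, then gluing disjoint blocks into one scaling) is the paper's. Your Step~1 is exactly the paper's proof of the tail claim, carried out with $\tilde G^{\vee\vee}$. The gap is in your opening reduction, on which Step~2 rests.

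From $\sup\tilde K_{aqg}=\infty$ you conclude that for every $M$ there is a \emph{single} greedy ordering $\pi$ with $\norm{\tilde G^{\vee}_{\pi,\abs{\supp(x)}}(x)}\geq M\norm{x}$. You justify this by ``taking a supremum over $\pi$ only enlarges the left-hand side'', but that remark proves the converse implication. The failure of Thm.~\ref{UniformGO:Charac}~\ref{UniformGO:Charac:vi} only gives you $\norm{\tilde G^{\vee\vee}_{\abs{\supp(x)}}(x)}\geq M\norm{x}$. This is a lower bound on the norm of the lattice supremum of the finitely many elements $\tilde G^{\vee}_{\pi,\abs{\supp(x)}}(x)$, $\pi\in\tilde\Gamma(x)$. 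That norm can be much larger than $\max_{\pi}\norm{\tilde G^{\vee}_{\pi,\abs{\supp(x)}}(x)}$, and the discrepancy is precisely what separates uniformly from absolutely quasi-greedy bases (cf.\ the remark on the Rademacher sequence after Prop.~\ref{UniformGO:Property:1}).

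This is not a cosmetic slip. Under the standing hypothesis every $\lambdaen$ is in particular uniformly quasi-greedy, so Prop.~\ref{Scaling:Property:1} already controls the single-ordering ratios uniformly over all admissible scalings. An argument that fixes one greedy ordering $\pi_k$ per block, as your Step~2 does, therefore only reruns the proof of Prop.~\ref{Scaling:Property:1}. It never touches the quantity the present proposition is about. The witnesses $\pi_k$ with $\norm{\tilde G^{\vee}_{\pi_k,\abs{\supp(x_k)}}(x_k)}\geq k\norm{x_k}$ are not supplied by your Step~1.

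The repair is easy and gives the paper's argument: keep $\tilde G^{\vee\vee}$ in Step~2 as well. Step~1 provides, for each $k$, a scaling $\lambda^k$ and a finitely supported $x_k$, supported beyond $\max\supp(x_{k-1})$, with $\norm{\tilde G^{\vee\vee}_{\abs{\supp(x_k)}}(x_k)}\geq k\norm{x_k}$ with respect to $(\lambda^k_n e_n)$.

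After gluing, use your own observation from the last paragraph. The \emph{whole} set of greedy orderings of $x_k$ depends only on the ratios on $\supp(x_k)$, so it is the same for the glued scaling as for $\lambda^k$. Moreover, for a fixed $\pi$ the vector $\tilde G_{\pi,m}(x_k)=\sum_{n\leq m}e^*_{\pi(n)}(x_k)e_{\pi(n)}$ does not depend on the scaling at all. Hence $\tilde G^{\vee\vee}_{\abs{\supp(x_k)}}(x_k)$ is unchanged. Thm.~\ref{UniformGO:Charac}~\ref{UniformGO:Charac:vi} applied to the glued basis then gives $k\norm{x_k}\leq\tilde K_{aqg}\norm{x_k}$ for all $k$, which is the desired contradiction.
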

\begin{proof}
    We prove by contradiction. Assume that the supremum is infinite. We claim then that the supremum of the absolutely quasi-greedy constants over all such $(\lambda_n e_n)_{n\geq 2}$ is also infinite. The rest of the proof follows the same idea as in the proof of Prop.~\ref{Scaling:Property:1}. To prove the claim, let us assume that there exists a constant $M\geq 1$ such that for all $x\in E$ with finite support and $e^*_1(x)=0$ and for all scalings $\lambdan$ with $1\leq \abs{\lambda_n}\leq a$, we have
    \begin{equation*}  
        \norm{\tilde{G}_{\abs{\supp(x)}}^{\vee\vee}(x)}\leq M\norm{x},
    \end{equation*}
    where we used tilde to indicate that the greedy orderings are with respect to the scaled basis. For any $x\in E$ with finite support and $e^*_1(x)\neq 0$, a scaling $\lambdan$ with $1\leq\abs{\lambda_n}\leq a$, and a greedy ordering $\pi_{\lambda}$ of $x$ with respect to $\lambdaen$, we have then
    \begin{equation*}
    \tilde{G}_{\pi_{\lambda},\abs{\supp(x)}}^{\vee}(x) 
    \leq \abs{e^*_1(x)e_1} + \tilde{G}_{\pi_{\lambda},\abs{\supp(x)}-1}^{\vee}(x-e^*_1(x)e_1).
  \end{equation*} 
  Taking supremum over greedy orderings and then taking norm on both sides yields
  \begin{align*}
    \norm{\tilde{G}_{\abs{\supp(x)}}^{\vee\vee}(x)} 
    &\leq \norm{e^*_1(x)e_1} + \norm{\tilde{G}_{\abs{\supp(x)}-1}^{\vee\vee}(x-e^*_1(x)e_1)}\\
    &\leq \norm{e^*_1(x)e_1} + M\norm{x-e^*_1(x)e_1}\\
    &\leq (1+M)\norm{e^*_1(x)e_1} + M\norm{x}\\
    &\leq ((1+M)K_b+M)\norm{x}.
  \end{align*}
  This contradicts the assumption and therefore proves the claim.
\end{proof}
\section*{Acknowledgments}
The author would like to thank Mitchell Taylor for numerous valuable discussions.



\bibliographystyle{plain}
\bibliography{references.bib}

\end{document}